\def\diag{\mathop{\rm diag}\nolimits}
\def\tr{\mathop{\rm tr}\nolimits}
\def\build#1#2#3{\mathrel{\mathop{#1}\limits^{#2}_{#3}}}
\def \Vol {\mathop{\rm Vol}\nolimits}
\def \re {\mathop{\rm Re}\nolimits}
\def\etr{\mathop{\rm etr}\nolimits}
\renewenvironment{abstract}
                 {\vspace{6pt}
                  \begin{center}
                  \begin{minipage}{5in}
                  \centerline{\textbf{Abstract}}
                  \noindent\ignorespaces
                 }
                 {\end{minipage}\end{center}}
\newtheorem{thm}{\textbf{Theorem}}[section]
\newtheorem{cor}{\textbf{Corollary}}[section]
\newtheorem{prop}{\textbf{Proposition}}[section]
\theoremstyle{definition}
\title{\Large \textbf{Jacobians of singular matrix transformations: Extensions}}
\author{
  \textbf{Jos\'e A. D\'{\i}az-Garc\'{\i}a} \thanks{Corresponding author\newline
   {\bf Key words.}  Jacobians; Hausdorff measure; real, complex, quaternion and octonion singular random
    matrices; real normed division algebras.\newline
    2000 Mathematical Subject Classification. 15A23; 15B33; 15A09; 15B52; 60E05}\\
  {\normalsize Department of Statistics and Computation} \\
  {\normalsize 25350 Buenavista, Saltillo, Coahuila, Mexico} \\
  {\normalsize E-mail: jadiaz@uaaan.mx} \\[2ex]
  \textbf{Ram\'on Guti\'errez-S\'anchez} \\
  {\normalsize Department of Statistics and O.R} \\
  {\normalsize University of Granada} \\
  {\normalsize Granada 18071, Spain}\\
  {\normalsize E-mail: ramongs@ugr.es}\\
}
\date{}
\begin{document}
\maketitle

\begin{abstract}
This article presents a unified approach to simultaneously compute the Jacobians of several
singular matrix transformations in the real, complex, quaternion and octonion cases. Formally,
these Jacobians are obtained for real normed division algebras with respect to the Hausdorff
measure.
\end{abstract}

\section{Introduction}\label{sec1}

A fundamental tool in statistical theory and in particular in distribution theory is the
computation of Jacobians of matrix transformations. Many such Jacobians were first found in the
real case and subsequently for complex cases; among many others, see \citet{w:56};
\citet[Sections 4 and 8]{j:64}; \citet{k:65}, \citet{m:82}; \citet{rva:05a} and \citet{m:97}
for a detailed review of this topic. Most of these Jacobians were obtained with respect to the
Lebesgue measure. However, several recent articles have examined these Jacobian for the
singular random matrix case. i.e., they study densities and then calculate the Jacobians of the
matrix transformations with respect to the Hausdorff measure, see \citet{k:68}, \citet{u:94},
\citet{dgm:97}, \citet{dg:97}, \citet{s:03}, \citet{dgg:04}, \citet{dggg:05a},
\-\citet{dggg:05b}, \citet{ihl:07} and \citet{dg:07}, among others. In addition, \citet{rv:05}
studied some of these latter Jacobians in the singular complex case and \citet{lx:10}
considered the singular quaternion case.

Using results obtained from abstract algebra, it is possible to demonstrate a unified means of
addressing the computation of Jacobians in the singular and nonsingular real, complex,
quaternion and octonion cases, simultaneously. This approach has been used for some time in
random matrix theory in the nonsingular case, see \citet{er:05} and \citet{f:05}.

For the sake of completeness, in the present work we consider the case of octonions, but it
should be noted that many results for the octonion case can only be conjectured, because there
remain many unresolved theoretical problems in this respect, see \citet{dm:99}. Furthermore, as
stated by \citet{b:02}, the relevance of the octonion case for understanding the real world has
yet to be clarified.

The rest of this paper is structured as follows: Section \ref{sec2} provides some definitions
and notation on real normed division algebras, showing some ideas about the explicit form of
the Hasusdorff measure. The main results on the computation of Jacobians for singular matrix
transformations are presented in Section \ref{sec3}. It is emphasised that all these results
are obtained for real normed division algebras.

\section{Notation and real normed division algebras}\label{sec2}

Let us introduce some notation and useful results. A detailed discussion of real normed
division algebras may be found in \citet{b:02}. For convenience, we shall introduce some
notation, although in general we adhere to standard forms.

For the purposes of this study, a \textbf{vector space} is always a finite-dimensional module
over the field of real numbers. An \textbf{algebra} $\mathfrak{F}$ is a vector space that is
equipped with a bilinear map $m: \mathfrak{F} \times \mathfrak{F} \rightarrow \mathfrak{F}$
termed \emph{multiplication} and a nonzero element $1 \in \mathfrak{F}$ termed the \emph{unit}
such that $m(1,a) = m(a,1) = 1$. As usual, we abbreviate $m(a,b) = ab$ as $ab$. We do not
assume $\mathfrak{F}$ associative. Given an algebra, we freely think of real numbers as
elements of this algebra via the map $\omega \mapsto \omega 1$.

An algebra $\mathfrak{F}$ is a \textbf{division algebra} if given $a, b \in \mathfrak{F}$ with
$ab=0$, then either $a=0$ or $b=0$. Equivalently, $\mathfrak{F}$ is a division algebra if the
operation of left and right multiplications by any nonzero element is invertible. A
\textbf{normed division algebra} is an algebra $\mathfrak{F}$ that is also a normed vector
space with $||ab|| = ||a||||b||$. This implies that $\mathfrak{F}$ is a division algebra and
that $||1|| = 1$.

There are exactly four normed division algebras: real numbers ($\Re$), complex numbers
($\mathfrak{C}$), quaternions ($\mathfrak{H}$) and octonions ($\mathfrak{O}$), see
\citet{b:02}. We take into account that $\Re$, $\mathfrak{C}$, $\mathfrak{H}$ and
$\mathfrak{O}$ are the only normed division algebras; moreover, they are the only alternative
division algebras, and all division algebras have a real dimension of $1, 2, 4$ or $8$, which
is denoted by $\beta$, see \citet[Theorems 1, 2 and 3]{b:02}. In other branches of mathematics,
parameters $\alpha = 2/\beta$  and $t = \beta/4$ are used, see Table \ref{table1} and
\citet{er:05} and \citet{k:84}.

\medskip

\begin{table}[!h]
  \caption{Values of $\beta = 2/\alpha$ and $t = \beta/4$ parameters.}\label{table1}
  \renewcommand\arraystretch{1}
\begin{center}
\begin{tabular}{l|l l l}
  \hline\hline
  $\beta$ & $\alpha$ & $t$ & $\quad$Normed divison algebra \\
  \hline
  1 &   2 & 1/4 & real ($\Re$) \\
  2 &   1 & 1/2 & complex ($\mathfrak{C}$) \\
  4 & 1/2 &   1 & quaternionic ($\mathfrak{H}$) \\
  8 & 1/4 &   2 & octonion ($\mathfrak{O}$) \\
  \hline
\end{tabular}
\end{center}
\end{table}

Let $\mathfrak{L}^{\beta}_{m,n}$ be the linear space of all $n \times m$ matrices of rank $m
\leq n$ over $\mathfrak{F}$ with $m$ distinct positive singular values, where $\mathfrak{F}$
denotes a \emph{real finite-dimensional normed division algebra}. Let $\mathfrak{F}^{n \times
m}$ be the set of all $n \times m$ matrices over $\mathfrak{F}$. The dimension of
$\mathfrak{F}^{n \times m}$ over $\Re$ is $\beta mn$. Let $\mathbf{A} \in \mathfrak{F}^{n
\times m}$, then $\mathbf{A}^{*} = \overline{\mathbf{A}}^{T}$ denotes the usual conjugate
transpose.

The set of matrices $\mathbf{H}_{1} \in \mathfrak{F}^{n \times m}$ such that
$\mathbf{H}_{1}^{*}\mathbf{H}_{1} = \mathbf{I}_{m}$ is a manifold denoted ${\mathcal
V}_{m,n}^{\beta}$, and is termed the \emph{Stiefel manifold} ($\mathbf{H}_{1}$ is also known as
\emph{semi-orthogonal} ($\beta = 1$), \emph{semi-unitary} ($\beta = 2$), \emph{semi-symplectic}
($\beta = 4$) and \emph{semi-exceptional type} ($\beta = 8$) matrices, see \citet{dm:99}). The
dimension of $\mathcal{V}_{m,n}^{\beta}$ over $\Re$ is $[\beta mn - m(m-1)\beta/2 -m]$. In
particular, ${\mathcal V}_{m,m}^{\beta}$ with dimension over $\Re$, $[m(m+1)\beta/2 - m]$, is
the maximal compact subgroup $\mathfrak{U}^{\beta}(m)$ of $\mathfrak{L}^{\beta}_{m,m}$ and
consists of all matrices $\mathbf{H} \in \mathfrak{F}^{m \times m}$ such that
$\mathbf{H}^{*}\mathbf{H} = \mathbf{I}_{m}$. Therefore, $\mathfrak{U}^{\beta}(m)$ is the
\emph{real orthogonal group} $\mathcal{O}(m)$ ($\beta = 1$), the \emph{unitary group}
$\mathcal{U}(m)$ ($\beta = 2$), \emph{compact symplectic group} $\mathcal{S}p(m)$ ($\beta = 4$)
or \emph{exceptional type matrices} $\mathcal{O}o(m)$ ($\beta = 8$), for $\mathfrak{F} = \Re$,
$\mathfrak{C}$, $\mathfrak{H}$ or $\mathfrak{O}$, respectively.

We denote by ${\mathfrak S}_{m}^{\beta}$ the real vector space of all $\mathbf{S} \in
\mathfrak{F}^{m \times m}$ such that $\mathbf{S} = \mathbf{S}^{*}$. Let
$\mathfrak{P}_{m}^{\beta}$ be the \emph{cone of positive definite matrices} $\mathbf{S} \in
\mathfrak{F}^{m \times m}$; then $\mathfrak{P}_{m}^{\beta}$ is an open subset of ${\mathfrak
S}_{m}^{\beta}$. Over $\Re$, ${\mathfrak S}_{m}^{\beta}$ consist of \emph{symmetric} matrices;
over $\mathfrak{C}$, \emph{Hermitian} matrices; over $\mathfrak{H}$, \emph{quaternionic
Hermitian} matrices (also termed \emph{self-dual matrices}) and over $\mathfrak{O}$,
\emph{octonionic Hermitian} matrices. Generically, the elements of $\mathfrak{S}_{m}^{\beta}$
are termed \textbf{Hermitian matrices}, irrespective of the nature of $\mathfrak{F}$. The
dimension of $\mathfrak{S}_{m}^{\beta}$ over $\Re$ is $[m(m-1)\beta+2]/2$.

\noindent Let $\mathfrak{D}_{m}^{\beta}$ be the \emph{diagonal subgroup} of
$\mathcal{L}_{m,m}^{\beta}$ consisting of all $\mathbf{D} \in \mathfrak{F}^{m \times m}$,
$\mathbf{D} = \diag(d_{1}, \dots,d_{m})$ and let $\mathfrak{T}${\tiny U}$_{m,n}^{+\beta}$ be
the \emph{semi-upper-triangular subgroup} of $\mathcal{L}_{m,n}^{\beta}$ consisting of all
$\mathbf{T} \in \mathfrak{F}^{n \times m}$, with $t_{ii} > 0$.

Now, let $\mathfrak{L}_{m,n}^{+ \beta}(q)$ be the linear space of all $n \times m$ matrices of
rank $q \leq \min(n,m)$ with $q$ distinct singular values and let $\mathfrak{S}_{m}^{+
\beta}(q)$, the $(\beta mq - \beta q(q - 1)/2)- q$-dimensional manifold of rank $q$ of positive
semidefinite matrices $\mathbf{S} \in \mathfrak{F}^{m \times m}$ with $q$ distinct positive
eigenvalues.

For any matrix $\mathbf{X} \in \mathfrak{F}^{n \times m}$, $d\mathbf{X}$ denotes the\emph{
matrix of differentials} $(dx_{ij})$, and we denote the \emph{measure} or volume element as
$(d\mathbf{X})$ when $\mathbf{X} \in \mathfrak{F}^{m \times n}$, see \citet{d:02}.

If $\mathbf{X} \in \mathfrak{F}^{n \times m}$ then $(d\mathbf{X})$ (the Lebesgue measure in
$\mathfrak{F}^{n \times m}$) denotes the exterior product of the $\beta mn$ functionally
independent variables
$$
  (d\mathbf{X}) = \bigwedge_{i = 1}^{n}\bigwedge_{j = 1}^{m}dx_{ij} \quad \mbox{ where }
    \quad dx_{ij} = \bigwedge_{k = 1}^{\beta}dx_{ij}^{(k)}.
$$

If $\mathbf{S} \in \mathfrak{S}_{m}^{\beta}$ (or $\mathbf{S} \in \mathfrak{T}${\tiny
U}$_{m,n}^{+\beta}$) then $(d\mathbf{S})$ (the Lebesgue measure in $\mathfrak{S}_{m}^{\beta}$
or in $\mathfrak{T}${\tiny U}$_{m,n}^{+\beta}$) denotes the exterior product of the
$m(m+1)\beta/2$ functionally independent variables (or denotes the exterior product of the
$m(m-1)\beta/2 + n$ functionally independent variables, if $s_{ii} \in \Re$ for all $i = 1,
\dots, m$)
$$
  (d\mathbf{S}) = \left\{
                    \begin{array}{ll}
                      \displaystyle\bigwedge_{i \leq j}^{m}\bigwedge_{k = 1}^{\beta}ds_{ij}^{(k)}, &  \\
                      \displaystyle\bigwedge_{i=1}^{m} ds_{ii}\bigwedge_{i < j}^{m}\bigwedge_{k = 1}^{\beta}ds_{ij}^{(k)}, &
                       \hbox{if } s_{ii} \in \Re.
                    \end{array}
                  \right.
$$
The context generally establishes the conditions on the elements of $\mathbf{S}$, that is, if
$s_{ij} \in \Re$, $\in \mathfrak{C}$, $\in \mathfrak{H}$ or $ \in \mathfrak{O}$. It is
considered that
$$
  (d\mathbf{S}) = \bigwedge_{i \leq j}^{m}\bigwedge_{k = 1}^{\beta}ds_{ij}^{(k)}
   \equiv \bigwedge_{i=1}^{m} ds_{ii}\bigwedge_{i < j}^{m}\bigwedge_{k =
1}^{\beta}ds_{ij}^{(k)}.
$$
Observe, too, that for the Lebesgue measure $(d\mathbf{S})$ defined thus, it is required that
$\mathbf{S} \in \mathfrak{P}_{m}^{\beta}$, that is, $\mathbf{S}$ must be a non singular
Hermitian matrix (Hermitian positive definite matrix).

If $\mathbf{\Lambda} \in \mathfrak{D}_{m}^{\beta}$ then $(d\mathbf{\Lambda})$ (the Lebesgue
measure in $\mathfrak{D}_{m}^{\beta}$) denotes the exterior product of the $\beta m$
functionally independent variables
$$
  (d\mathbf{\Lambda}) = \bigwedge_{i = 1}^{n}\bigwedge_{k = 1}^{\beta}d\lambda_{i}^{(k)}.
$$

In addition, observe that, if $\mathbf{X} \in \mathfrak{L}_{m,n}^{+ \beta}(q)$, we can write
$\mathbf{X}$ as
$$
  \mathbf{X}_{1} = \left (
          \begin{array}{cc}
            \build{\mathbf{X}_{11}}{}{q \times q} &  \build{\mathbf{X}_{12}}{}{q \times m-q} \\
            \build{\mathbf{X}_{21}}{}{n-q \times q} &  \build{\mathbf{X}_{11}}{}{n-q \times m-q} \\
          \end{array}
          \right )
$$
such that $r(\mathbf{X}_{11}) = q$. This is equivalent to the right product of the matrix
$\mathbf{X}$ with a permutation matrix $\mathbf{\Pi}$, see \citet[section 3.4.1, 1996]{gvl:96},
that is $\mathbf{X}_{1} = \mathbf{X}\mathbf{\Pi}$. Note that the exterior product of the
elements from the differential matrix $d\mathbf{X}$ is not affected by the fact that we
multiply $\mathbf{X}$ (right or left) by a permutation matrix, that is, $(d\mathbf{X}_{1}) =
(d(\mathbf{X}\mathbf{\Pi})) = (d\mathbf{X})$, since $\mathbf{\Pi} \in \mathfrak{U}^{\beta}(m)$,
see \citet[Section 2.1, 1982]{m:82} and \citet{j:54}. Then, without loss of generality,
$(d\mathbf{X})$ will be defined as the exterior product of the differentials $dx_{ij}$, such
that $x_{ij}$ are mathematically independent. It is important to note that we will have $(nq+mq
-q^{2})\beta$ mathematically independent elements in the matrix $\mathbf{X} \in {\mathcal
L}_{m,n}^{+\beta}(q)$, corresponding to the elements of  $\mathbf{X}_{11}, \mathbf{X}_{12}$ and
$\mathbf{X}_{21}$. Explicitly,
\begin{equation}\label{X}
    (d\mathbf{X}) \equiv (d\mathbf{X}_{11})\wedge(d\mathbf{X}_{12})\wedge(d\mathbf{X}_{21}) =
            \bigwedge_{i=1}^{n}\bigwedge_{j=1}^{q}\bigwedge_{k = 1}^{\beta}dx_{ij}^{(k)}
            \bigwedge_{i=1}^{q} \bigwedge_{j = q+1}^{m}\bigwedge_{k = 1}^{\beta}dx_{ij}^{(k)}
\end{equation}

Similarly, given $\mathbf{S} \in \mathfrak{S}_{m}^{+ \beta}(q)$, we define $(d\mathbf{S})$ as
$$
  (d\mathbf{S}) = \left\{
                    \begin{array}{ll}
                      \displaystyle\bigwedge_{i=1}^{q}\bigwedge_{j = i}^{m}
                      \bigwedge_{k = 1}^{\beta}ds_{ij}^{(k)}, &  \\
                      \displaystyle\bigwedge_{i=1}^{q} ds_{ii}\bigwedge_{i=1}^{q}\bigwedge_{j = i+1}^{m}
                      \bigwedge_{k = 1}^{\beta}ds_{ij}^{(k)}, &
                       \hbox{if } s_{ii} \in \Re.
                    \end{array}
                  \right.
$$
The context generally establishes the conditions on the elements of $\mathbf{S}$, that is, if
$s_{ij} \in \Re$, $\in \mathfrak{C}$, $\in \mathfrak{H}$ or $ \in \mathfrak{O}$. It is
considered that
$$
  (d\mathbf{S}) = \bigwedge_{i=1}^{q}\bigwedge_{j = i}^{m}\bigwedge_{k = 1}^{\beta}ds_{ij}^{(k)}
   \equiv \bigwedge_{i=1}^{q} ds_{ii}\bigwedge_{i=1}^{q}\bigwedge_{j = i+1}^{m}\bigwedge_{k = 1}^{\beta}ds_{ij}^{(k)}.
$$
Again, we should note that, for this case, the matrix $\mathbf{S}$ can be written as
$$
   \mathbf{S} \equiv \left (
          \begin{array}{cc}
            \build{\mathbf{S}_{11}}{}{q \times q} & \build{\mathbf{S}_{12}}{}{q \times m-q} \\
            \build{\mathbf{S}_{21}}{}{m-q \times q} & \build{\mathbf{S}_{22}}{}{m-q \times m-q} \\
          \end{array}
       \right ) \qquad \mbox{with}  \qquad r(\mathbf{S}_{11}) = q.
$$
such that the number of mathematically independent elements in $\mathbf{S}$ is $\beta mq -q(q +
1)\beta/2 - q$ corresponding to the mathematically independent elements of $\mathbf{S}_{12}$
and $\mathbf{S}_{11}$. Recall that $\mathbf{S}_{11} \in \mathfrak{P}_{q}^{\beta}$, in such a
way that $\mathbf{S}_{11}$ has $q(q -1)\beta/2 + q$, therefore,
$$
    (d\mathbf{S}) \equiv (d\mathbf{S}_{11})\wedge(d\mathbf{S}_{12}).
$$

Observe that an explicit form for $(d\mathbf{X})$ and $(d\mathbf{S})$ depends on the
factorisation (base and coordinate set) employed to represent $\mathbf{X}$ or $\mathbf{S}$,
that is, they depend on the measure factorisation of $(d\mathbf{X})$ and $(d\mathbf{S})$. For
example, by using the nonsingular part of the decomposition in singular values and the
nonsingular part of the spectral decomposition for $\mathbf{X}$ and $\mathbf{S}$, then we can
find an explicit form for $(d\mathbf{X})$ and $(d\mathbf{S})$ (see Propositions \ref{lemsvd}
and \ref{lemsd}, respectively), which are not unique, see \citet{k:68}, \citet{dgm:97},
\citet{u:94} and \citet{dg:97}. Alternatively, an explicit form for $(d\mathbf{X})$ and
$(d\mathbf{S})$ can be found in terms of the QR and Cholesky decompositions, see Propositions
\ref{teoqr} and \ref{teoch}, respectively.

A singular random matrix $\mathbf{X}$ in  $\mathfrak{L}_{m,n}^{+ \beta}(q)$ or
$\mathfrak{S}_{m}^{+\beta}(q)$ does not have a density with respect to Lebesgue's measure in
$\mathfrak{F}^{n \times m}$, but it does possess a density on a subspace ${\mathcal M} \subset
\mathfrak{F}^{n \times m}$; see \citet{k:68}, \citet[p. 527]{r:73}, \citet{dgm:97},
\citet{u:94} and \citet[p. 297]{c:99}. Formally, $\mathbf{X}$ has a density with respect to
Hausdorff's measure, which coincides with Lebesgue's measure, when the latter is defined on the
subspace ${\mathcal M}$; see \citet[p. 247]{b:86}, \citet{u:94}, \citet{dgm:97}, \citet{dgg:05}
and \citet{dggj:06}.

The surface area or volume of the Stiefel manifold $\mathcal{V}^{\beta}_{m,n}$ is
\begin{equation}\label{vol}
    \Vol(\mathcal{V}^{\beta}_{m,n}) = \int_{\mathbf{H}_{1} \in
  \mathcal{V}^{\beta}_{m,n}} (\mathbf{H}^{*}_{1}d\mathbf{H}_{1}) =
  \frac{2^{m}\pi^{mn\beta/2}}{\Gamma^{\beta}_{m}[n\beta/2]},
\end{equation}
where $\mathbf{H} = (\mathbf{H}_{1}|\mathbf{H}_{2}) = (\mathbf{h}_{1}, \dots,
\mathbf{h}_{m}|\mathbf{h}_{m+1}, \dots, \mathbf{h}_{n}) \in \mathfrak{U}^{\beta}(m)$. It can be
proved that this differential form does not depend on the choice of the $\mathbf{H}_{2}$
matrix. When $m = 1$; $\mathcal{V}^{\beta}_{1,n}$ defines the unit sphere in
$\mathfrak{F}^{n}$. This is, of course, an $(n-1)\beta$- dimensional surface in
$\mathfrak{F}^{n}$. When $m = n$ and denoting $\mathbf{H}_{1}$ by $\mathbf{H}$,
$(\mathbf{H}^{*}d\mathbf{H})$ is termed the \emph{Haar measure} on $\mathfrak{U}^{\beta}(m)$.
Also, $\Gamma^{\beta}_{m}[a]$ denotes the multivariate Gamma function for the space
$\mathfrak{S}_{m}^{\beta}$, and is defined by
\begin{eqnarray*}
  \Gamma_{m}^{\beta}[a] &=& \displaystyle\int_{\mathbf{A} \in \mathfrak{P}_{m}^{\beta}}
  \etr\{-\mathbf{A}\} |\mathbf{A}|^{a-(m-1)\beta/2 - 1}(d\mathbf{A}) \\
&=& \pi^{m(m-1)\beta/4}\displaystyle\prod_{i=1}^{m} \Gamma[a-(i-1)\beta/2],
\end{eqnarray*}
where $\etr(\cdot) = \exp(\tr(\cdot))$, $|\cdot|$ denotes the determinant and $\re(a)
> (m-1)\beta/2$, see \citet{gr:87}.

\section{Jacobians}\label{sec3}

We now consider several Jacobians of singular matrix transformations in terms of the $\beta$
parameter. For a detailed discussion of related issues see \citet{u:94}, \citet{dg:97},
\citet{dgm:97}, \citet{rv:05}, \citet{ihl:07}, \-\citet{dggj:11} and \citet{lx:10}.

Propositions \ref{lemsvd} and \ref{lemsd} and Corollary \ref{lemW} generalise the results in
\citet{dgm:97}, \citet{rv:05} and \citet{lx:10} obtained for the real, complex and quaternion
cases, respectively.

\begin{prop}[Singular value decomposition, $SVD$]\label{lemsvd}
Let $\mathbf{X} \in \mathfrak{L}_{m,n}^{+ \beta}(q)$, such that $\mathbf{X} =
\mathbf{V}_{1}\mathbf{DW}_{1}^{*}$ be the nonsingular part of the SVD with, $\mathbf{V}_{1} \in
{\mathcal V}_{q,n}^{\beta}$, $\mathbf{W}_{1} \in {\mathcal V}_{q,m}^{\beta}$ and $\mathbf{D} =
\diag(d_{1}, \cdots,d_{q}) \in \mathfrak{D}_{m}^{1}$, $d_{1}> \cdots > d_{q} > 0$. Then
\begin{equation}\label{svd}
    (d\mathbf{X}) = 2^{-q}\pi^{\tau} \prod_{i = 1}^{q} d_{i}^{\beta(n + m - 2q +1) -1}
    \prod_{i < j}^{q}(d_{i}^{2} - d_{j}^{2})^{\beta} (d\mathbf{D})\wedge(\mathbf{V}_{1}^{*}d\mathbf{V}_{1})\wedge
    (\mathbf{W}_{1}^{*}d\mathbf{W}_{1}),
\end{equation}
where
$$
  \tau = \left\{
             \begin{array}{rl}
               0, & \beta = 1; \\
               -\beta q/2, & \beta = 2, 4, 8.
             \end{array}
           \right.
$$
\end{prop}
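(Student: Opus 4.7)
The plan is to reduce to a calculation on a full-rank matrix by completing $\mathbf{V}_1, \mathbf{W}_1$ to bases of the ambient spaces and exploiting the block structure produced by the rank-$q$ constraint.

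First I would pick any completions $\mathbf{V}=(\mathbf{V}_1\,|\,\mathbf{V}_2) \in \mathfrak{U}^{\beta}(n)$ and $\mathbf{W}=(\mathbf{W}_1\,|\,\mathbf{W}_2) \in \mathfrak{U}^{\beta}(m)$. As the excerpt remarks after (\ref{X}), the volume form $(d\mathbf{X})$ on $\mathfrak{L}_{m,n}^{+\beta}(q)$ is unchanged under unitary action on either side (the argument given there for a permutation matrix extends verbatim to any $\mathbf{V}\in\mathfrak{U}^{\beta}(n)$, $\mathbf{W}\in\mathfrak{U}^{\beta}(m)$), so one may work with $\mathbf{V}^{*}d\mathbf{X}\,\mathbf{W}$. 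Differentiating $\mathbf{X}=\mathbf{V}_1\mathbf{D}\mathbf{W}_1^{*}$ and multiplying by $\mathbf{V}^{*}$ on the left and $\mathbf{W}$ on the right produces the block form
$$
\mathbf{V}^{*}d\mathbf{X}\,\mathbf{W}=\begin{pmatrix}
(\mathbf{V}_1^{*}d\mathbf{V}_1)\mathbf{D}+d\mathbf{D}+\mathbf{D}(d\mathbf{W}_1^{*}\mathbf{W}_1) & \mathbf{D}(d\mathbf{W}_1^{*}\mathbf{W}_2) \\
(\mathbf{V}_2^{*}d\mathbf{V}_1)\mathbf{D} & 0
\end{pmatrix}.
$$
The vanishing of the $(n-q)\times(m-q)$ block is the analytic expression of the rank constraint and removes the Lebesgue directions that do not belong to $\mathfrak{L}_{m,n}^{+\beta}(q)$.

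Next I would wedge block by block. The upper-right $q\times(m-q)$ block $\mathbf{D}(d\mathbf{W}_1^{*}\mathbf{W}_2)$ has each row $i$ scaled by $d_i$, contributing $\prod_i d_i^{\beta(m-q)}$ together with the transverse part $(\mathbf{W}_2^{*}d\mathbf{W}_1)$ of the Stiefel measure $(\mathbf{W}_1^{*}d\mathbf{W}_1)$; symmetrically the lower-left block delivers $\prod_i d_i^{\beta(n-q)}$ and the transverse part $(\mathbf{V}_2^{*}d\mathbf{V}_1)$ of $(\mathbf{V}_1^{*}d\mathbf{V}_1)$. For the upper-left $q\times q$ block, the off-diagonal pair $(i,j),(j,i)$, in each of the $\beta$ real components, is a linear system in $(\mathbf{V}_1^{*}d\mathbf{V}_1)_{ij}$ and $(\mathbf{W}_1^{*}d\mathbf{W}_1)_{ij}$ whose coefficient determinant equals $d_i^{2}-d_j^{2}$; wedging over all $i<j$ and all $\beta$ components yields $\prod_{i<j}(d_i^{2}-d_j^{2})^{\beta}$. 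The diagonal entry at $(i,i)$ decomposes into the real part $dd_i$ together with $(\beta-1)$ pure-imaginary components, each a $d_i$-multiple of a single combination of the pure-imaginary diagonal entries of $(\mathbf{V}_1^{*}d\mathbf{V}_1)_{ii}$ and $(\mathbf{W}_1^{*}d\mathbf{W}_1)_{ii}$, contributing an extra factor $\prod_i d_i^{\beta-1}$. Summing the exponents of $d_i$ gives $\beta(m-q)+\beta(n-q)+(\beta-1)=\beta(n+m-2q+1)-1$ as required.

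The constant $2^{-q}\pi^{\tau}$ encodes the non-uniqueness of the SVD. After the preceding block calculation, only the differences of the diagonal pure-imaginary components of $(\mathbf{V}_1^{*}d\mathbf{V}_1)$ and $(\mathbf{W}_1^{*}d\mathbf{W}_1)$ (a $(\beta-1)q$-dimensional subspace) actually appear on the left-hand side, while the wedge $(\mathbf{V}_1^{*}d\mathbf{V}_1)\wedge(\mathbf{W}_1^{*}d\mathbf{W}_1)$ on the right-hand side includes both copies of the imaginary parts. Modding out by the residual gauge action---simultaneous multiplication of the $i$th columns of $\mathbf{V}_1$ and $\mathbf{W}_1$ by a unit $u_i$ in $S^{\beta-1}\subset\mathfrak{F}$---accounts for this redundant $(\beta-1)q$-dimensional factor: dividing by $\bigl(\Vol(S^{\beta-1})\bigr)^{q}$ returns $2^{-q}$ for $\beta=1$ and $2^{-q}\pi^{-\beta q/2}$ for $\beta=2,4$, matching the tabulated values of $\tau$.

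The most delicate point is the octonion case $\beta=8$, where $\mathfrak{O}$ is non-associative and $S^{7}$ is not a group; the manipulations of unit matrices and of the quotient by the phase action then require \emph{ad hoc} justification, and, as the authors warn in the introduction following \citet{dm:99}, only a conjectural extension of the pattern is available. A secondary technical point is making precise the invariance of the singular Hausdorff measure $(d\mathbf{X})$ under the full bi-unitary action, needed to replace $d\mathbf{X}$ by $\mathbf{V}^{*}d\mathbf{X}\,\mathbf{W}$; this I would establish by covering $\mathfrak{L}_{m,n}^{+\beta}(q)$ with charts of the form (\ref{X}) and checking invariance chart by chart.
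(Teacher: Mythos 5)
The paper never actually proves this proposition: it is stated as the $\beta$-unified version of results proved case by case in \citet{dgm:97} (real), \citet{rv:05} (complex) and \citet{lx:10} (quaternion), and the reader is referred to those sources. Your derivation is, in substance, exactly the argument those references use, carried out uniformly in $\beta$: pass to $\mathbf{V}^{*}d\mathbf{X}\,\mathbf{W}$, note that the $(n-q)\times(m-q)$ block vanishes (the rank constraint), and wedge block by block. Your bookkeeping is correct: the off-diagonal $q\times(m-q)$ and $(n-q)\times q$ blocks give $\prod_i d_i^{\beta(m-q)}$ and $\prod_i d_i^{\beta(n-q)}$ together with the transverse parts of the two Stiefel forms, the paired $2\times 2$ systems give $\prod_{i<j}(d_i^2-d_j^2)^{\beta}$, and the diagonal gives $(d\mathbf{D})$ plus the extra $\prod_i d_i^{\beta-1}$, so the exponent $\beta(n+m-2q+1)-1$ comes out right. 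You also put your finger on the point the paper leaves implicit: for $\beta>1$ the right-hand side of (\ref{svd}) carries $(\beta-1)q$ more differentials than $(d\mathbf{X})$ (only the differences of the imaginary diagonal parts of $\mathbf{V}_1^{*}d\mathbf{V}_1$ and $\mathbf{W}_1^{*}d\mathbf{W}_1$ occur on the left), so the identity must be read modulo the fiber $u_i\in S^{\beta-1}$ acting on the columns, and the normalisation $2^{-q}\pi^{\tau}=\bigl(\Vol(S^{\beta-1})\bigr)^{-q}$ is exactly what this quotient produces for $\beta=1,2,4$; for $\beta=8$ the stated constant does not even equal $\Vol(S^{7})^{-q}$, so flagging that case as conventional/conjectural (as the paper itself does, following \citet{dm:99}) is the correct stance. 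The two technical points you defer --- invariance of the Hausdorff measure on $\mathfrak{L}_{m,n}^{+\beta}(q)$ under the bi-unitary isometries (the permutation argument does not extend ``verbatim''; one needs isometry invariance, checked in the charts of (\ref{X})), and the fact that $\mathbf{V},\mathbf{W}$ vary with $\mathbf{X}$ so the reduction to $\mathbf{V}^{*}d\mathbf{X}\,\mathbf{W}$ is a pointwise tangent-space computation --- are genuine but standard, and are handled the same way in the cited case-by-case proofs; so there is no gap beyond what you have explicitly acknowledged.
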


\begin{prop}[Spectral decomposition, $SD$]\label{lemsd}
Let $\mathbf{S} \in \mathfrak{S}_{m}^{+\beta}(q)$. Then the nonsingular part of the spectral
decomposition can be written as $\mathbf{S} = \mathbf{W}_{1}\mathbf{\Lambda W}^{*}_{1}$, where
$\mathbf{W}_{1} \in {\mathcal V}_{q,m}^{\beta}$ and $\mathbf{\Lambda} = \diag(\lambda_{1},
\dots, \lambda_{m}) \in \mathfrak{D}_{q}^{1}$, with $\lambda_{1}> \cdots> \lambda_{q}>0$. Then
\begin{equation}\label{sd}
    (d\mathbf{S}) = 2^{-q} \pi^{\tau} \prod_{i = 1}^{q} \lambda_{i}^{\beta(m-q)}\prod_{i < j}^{q} (\lambda_{i} - \lambda_{j})^{\beta}
    (d\mathbf{\Lambda})\wedge(\mathbf{W}_{1}^{*}d\mathbf{W}_{1}),
\end{equation}
where $\tau$ is defined in Proposition \ref{lemsvd}.
\end{prop}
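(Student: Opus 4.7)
The strategy is to mimic the nonsingular spectral-decomposition proof (see e.g.\ \citet{dgm:97} and \citet{lx:10}) and adapt it to the rank-$q$ setting, paralleling how Proposition \ref{lemsvd} adapts the nonsingular SVD Jacobian. First I complete $\mathbf{W}_{1}$ to a full unitary matrix $\mathbf{W}=(\mathbf{W}_{1},\mathbf{W}_{2})\in\mathfrak{U}^{\beta}(m)$, and rewrite the decomposition as $\mathbf{S}=\mathbf{W}\widetilde{\mathbf{\Lambda}}\mathbf{W}^{*}$ with $\widetilde{\mathbf{\Lambda}}=\diag(\mathbf{\Lambda},\mathbf{0}_{m-q})$. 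Since conjugation by $\mathbf{W}\in\mathfrak{U}^{\beta}(m)$ is an isometry of $\mathfrak{S}_{m}^{\beta}$ preserving the submanifold $\mathfrak{S}_{m}^{+\beta}(q)$, it preserves the Hausdorff volume element, and it suffices to compute $(d(\mathbf{W}^{*}\mathbf{S}\mathbf{W}))$.

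Set $\mathbf{A}=\mathbf{W}^{*}d\mathbf{W}$, which is skew-Hermitian, and partition conformally into blocks $\mathbf{A}_{ij}$. Using the identity $d\mathbf{W}_{1}^{*}\mathbf{W}=(-\mathbf{A}_{11},-\mathbf{A}_{12})$ obtained by differentiating $\mathbf{W}_{1}^{*}\mathbf{W}_{1}=\mathbf{I}_{q}$ and $\mathbf{W}_{1}^{*}\mathbf{W}_{2}=\mathbf{0}$, a direct computation gives
$$
  \mathbf{W}^{*}d\mathbf{S}\mathbf{W}=\begin{pmatrix} d\mathbf{\Lambda}+\mathbf{A}_{11}\mathbf{\Lambda}-\mathbf{\Lambda}\mathbf{A}_{11} & -\mathbf{\Lambda}\mathbf{A}_{12} \\ \mathbf{A}_{21}\mathbf{\Lambda} & \mathbf{0} \end{pmatrix}.
$$
The vanishing of the $(2,2)$ block is the infinitesimal manifestation of the rank-$q$ constraint defining $\mathfrak{S}_{m}^{+\beta}(q)$, so the independent differentials come only from the top-left Hermitian $q\times q$ block and the top-right $q\times(m-q)$ block, and their count matches the dimension of $\mathfrak{S}_{m}^{+\beta}(q)$.

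Taking the wedge product of these independent entries produces the announced prefactors. Because real scalars commute with every element of $\mathfrak{F}$, the diagonal of the top-left block reduces to $d\lambda_{i}$, contributing $(d\mathbf{\Lambda})$. The $(i,j)$-entry with $i<j\leq q$ equals $(\lambda_{j}-\lambda_{i})(\mathbf{A}_{11})_{ij}$, so wedging over its $\beta$ real components contributes the factor $(\lambda_{i}-\lambda_{j})^{\beta}$. The $(i,j)$-entry of the off-diagonal block equals $(\mathbf{A}_{21})_{ij}\lambda_{j}$, so wedging over its $m-q$ rows and $\beta$ real components contributes $\lambda_{j}^{\beta(m-q)}$. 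Multiplying everything out yields $\prod_{i<j}(\lambda_{i}-\lambda_{j})^{\beta}\prod_{j}\lambda_{j}^{\beta(m-q)}$ times $(d\mathbf{\Lambda})$ wedged with the strictly upper-triangular components of $\mathbf{A}_{11}$ and all components of $\mathbf{A}_{21}$.

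The remaining and most delicate step is to reconcile this wedge with $(d\mathbf{\Lambda})\wedge(\mathbf{W}_{1}^{*}d\mathbf{W}_{1})$ and to extract the constant $2^{-q}\pi^{\tau}$. The Stiefel form $(\mathbf{W}_{1}^{*}d\mathbf{W}_{1})$ also carries the $q(\beta-1)$ purely imaginary diagonal components of $\mathbf{A}_{11}$, which are absent from $(d\mathbf{S})$ because $\mathbf{S}$ is invariant under the right-action $\mathbf{W}_{1}\mapsto\mathbf{W}_{1}\mathbf{E}$ for any diagonal matrix $\mathbf{E}$ of unit-norm elements of $\mathfrak{F}$. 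Quotienting out this phase redundancy produces the normalising constant: the two-point group $\{\pm 1\}^{q}$ yields $2^{-q}$ for $\beta=1$, while the continuous unit-sphere actions $\mathcal{U}(1)^{q}$ and $\mathcal{S}p(1)^{q}$ absorb the factor $\pi^{-\beta q/2}$ for $\beta=2$ and $\beta=4$. The octonionic case ($\beta=8$) is the hardest point, since the unit octonions do not form a group; here the formula is obtained by formal analogy and is conjectural, consistent with the caveats announced in the introduction.
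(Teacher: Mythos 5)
The paper itself contains no proof of Proposition \ref{lemsd}: it is stated as the unified $\beta$-version of results established in \citet{dgm:97}, \citet{rv:05} and \citet{lx:10} for the real, complex and quaternion cases. Your argument reconstructs precisely the standard proof used in those references, and its core is sound: the block identity for $\mathbf{W}^{*}d\mathbf{S}\mathbf{W}$ is computed correctly, the independent differentials are the ones you list (their number $q+\beta q(q-1)/2+\beta q(m-q)$ indeed equals the dimension of $\mathfrak{S}_{m}^{+\beta}(q)$), and wedging them yields exactly the factors $\prod_{i}\lambda_{i}^{\beta(m-q)}\prod_{i<j}(\lambda_{i}-\lambda_{j})^{\beta}$ of (\ref{sd}).

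Two steps should be tightened. First, ``it suffices to compute $(d(\mathbf{W}^{*}\mathbf{S}\mathbf{W}))$'' needs the usual justification, since $\mathbf{W}$ depends on $\mathbf{S}$: one works at a fixed point and invokes invariance of the exterior product under conjugation by a fixed element of $\mathfrak{U}^{\beta}(m)$, as in \citet{j:54} and the cited papers. Second, the normalising constant is asserted rather than derived. For $\beta>1$ the form $(d\mathbf{\Lambda})\wedge(\mathbf{W}_{1}^{*}d\mathbf{W}_{1})$ carries $q(\beta-1)$ more differentials than $\dim\mathfrak{S}_{m}^{+\beta}(q)$ (the diagonal components of $\mathbf{W}_{1}^{*}d\mathbf{W}_{1}$), so (\ref{sd}) must be read in the integration sense; your quotient idea is the right mechanism, but to obtain $2^{-q}\pi^{\tau}$ you must actually integrate the redundant diagonal phases over the stabiliser $\{\pm1\}^{q}$, $\mathcal{U}(1)^{q}$, $\mathcal{S}p(1)^{q}$, whose per-coordinate volumes $2$, $2\pi$, $2\pi^{2}$ give $2^{-q}$, $(2\pi)^{-q}=2^{-q}\pi^{-q}$ and $(2\pi^{2})^{-q}=2^{-q}\pi^{-2q}$ for $\beta=1,2,4$; note in particular that the factor $2^{-q}$ for $\beta=2,4$ also comes from this fibre volume, not only from the sign ambiguity you attribute to $\beta=1$. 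Your caution about $\beta=8$ is warranted: the same recipe would give $\Vol(S^{7})^{-q}=(\pi^{4}/3)^{-q}$, which differs from $2^{-q}\pi^{-4q}$, so the octonionic constant in (\ref{sd}) is indeed only a formal continuation, consistent with the paper's own caveat.
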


\begin{cor}\label{lemW}
Let $\mathbf{X} \in \mathfrak{L}_{m,n}^{+\beta}(q)$, and  $\mathbf{S} =
\mathbf{X}^{*}\mathbf{X} = \mathbf{W}_{1}\mathbf{\Lambda W}^{*}_{1} \in
\mathfrak{S}_{m}^{+\beta}(q).$ Then
\begin{equation}\label{w}
    (d\mathbf{X}) = 2^{-q} |\mathbf{\Lambda}|^{\beta(n - m + 1)/2 - 1}
    (d\mathbf{S})\wedge(\mathbf{V}_{1}^{*}d\mathbf{V}_{1}),
\end{equation}
with $\mathbf{V}_{1} \in {\mathcal V}_{n,q}^{\beta}$.
\end{cor}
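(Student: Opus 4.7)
The plan is to derive the formula by combining Propositions \ref{lemsvd} and \ref{lemsd} through the SVD of $\mathbf{X}$, eliminating the intermediate variables $\mathbf{D}$, $\mathbf{W}_{1}$ (and hence $\mathbf{\Lambda}$) that appear in both decompositions. First I would write the nonsingular SVD $\mathbf{X} = \mathbf{V}_{1}\mathbf{D}\mathbf{W}_{1}^{*}$ with $\mathbf{V}_{1} \in \mathcal{V}_{q,n}^{\beta}$, $\mathbf{W}_{1} \in \mathcal{V}_{q,m}^{\beta}$ and $\mathbf{D} = \diag(d_1,\dots,d_q)$, and observe that $\mathbf{S} = \mathbf{X}^{*}\mathbf{X} = \mathbf{W}_{1}\mathbf{D}^{2}\mathbf{W}_{1}^{*}$, so the nonsingular spectral decomposition of $\mathbf{S}$ has eigenvector matrix $\mathbf{W}_{1}$ (matching the right singular vectors of $\mathbf{X}$) and $\mathbf{\Lambda} = \mathbf{D}^{2}$, i.e.\ $\lambda_{i} = d_{i}^{2}$.

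Next I would compute the simple one-dimensional relation $(d\mathbf{\Lambda}) = 2^{q}\prod_{i=1}^{q}d_{i}\,(d\mathbf{D})$ coming from $d\lambda_{i} = 2d_{i}\,dd_{i}$. Applying Proposition \ref{lemsd} with $\lambda_{i} = d_{i}^{2}$ and substituting this, I obtain
\begin{equation*}
(d\mathbf{S}) = \pi^{\tau}\prod_{i=1}^{q} d_{i}^{2\beta(m-q)+1}\prod_{i<j}^{q}(d_{i}^{2}-d_{j}^{2})^{\beta}\,(d\mathbf{D})\wedge(\mathbf{W}_{1}^{*}d\mathbf{W}_{1}).
\end{equation*}
Proposition \ref{lemsvd} gives an analogous expression for $(d\mathbf{X})$ in terms of the same wedge $(d\mathbf{D})\wedge(\mathbf{W}_{1}^{*}d\mathbf{W}_{1})$ plus the extra factor $(\mathbf{V}_{1}^{*}d\mathbf{V}_{1})$.

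The final step is to solve the first equation for $(d\mathbf{D})\wedge(\mathbf{W}_{1}^{*}d\mathbf{W}_{1})$ and substitute into the SVD formula for $(d\mathbf{X})$. The constants $\pi^{\tau}$ and the Vandermonde-type factors $\prod_{i<j}(d_{i}^{2}-d_{j}^{2})^{\beta}$ cancel exactly, leaving
\begin{equation*}
(d\mathbf{X}) = 2^{-q}\prod_{i=1}^{q} d_{i}^{\beta(n+m-2q+1)-1-2\beta(m-q)-1}\,(d\mathbf{S})\wedge(\mathbf{V}_{1}^{*}d\mathbf{V}_{1}).
\end{equation*}
Simplifying the exponent gives $\beta(n-m+1)-2$, and since $|\mathbf{\Lambda}| = \prod_{i=1}^{q}d_{i}^{2}$ one has $\prod d_{i}^{\beta(n-m+1)-2} = |\mathbf{\Lambda}|^{\beta(n-m+1)/2 - 1}$, yielding \eqref{w}.

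The only thing that requires care is the bookkeeping of the exponents of $d_{i}$ and the verification that the $\pi^{\tau}$ and Vandermonde factors cancel identically in all four algebras; there is no genuine analytic obstacle, since the cancellation is uniform in $\beta$. The note that $\mathbf{V}_{1}$ in the statement lies in $\mathcal{V}_{q,n}^{\beta}$ (as in Proposition \ref{lemsvd}) should be read as a transcription of the same object.
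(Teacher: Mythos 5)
Your proposal is correct and follows exactly the route the paper intends: its proof of Corollary \ref{lemW} simply states that the result follows immediately from Propositions \ref{lemsvd} and \ref{lemsd}, which is precisely the combination (via $\mathbf{\Lambda}=\mathbf{D}^{2}$ and elimination of $(d\mathbf{D})\wedge(\mathbf{W}_{1}^{*}d\mathbf{W}_{1})$) that you carry out, and your exponent bookkeeping checks out.
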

\begin{proof}
The proof is obtained immediately from Propositions \ref{lemsvd} and \ref{lemsd}. \qed
\end{proof}

Let $\mathbf{A} \in \mathfrak{L}_{n,m}^{+\beta}(q)$ then $\mathbf{A}^{+} \in
\mathfrak{L}_{n,m}^{+\beta}(q)$ denotes its Moore-Penrose inverse.

\begin{thm}[Moore-Penrose inverse $\mathbf{S} = \mathbf{S}^{*}$]\label{theoi}
Let $\mathbf{S} \in \mathfrak{S}_{m}^{+\beta}(q)$ as in Proposition \ref{lemsd}. Then ignoring
the sign, if $\mathbf{V} = \mathbf{S}^{+}\in \mathfrak{S}_{m}^{+\beta}(q)$
\begin{equation}\label{gis}
    (d\mathbf{V}) = \prod_{i = 1}^{q} \lambda_{i}^{\beta(-2m +q+ 1) - 2}(d\mathbf{S}).
\end{equation}
\end{thm}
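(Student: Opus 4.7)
The plan is to apply Proposition \ref{lemsd} twice, once to $\mathbf{S}$ and once to $\mathbf{V}=\mathbf{S}^{+}$, and then eliminate the common Stiefel part $(\mathbf{W}_{1}^{*}d\mathbf{W}_{1})$. The key observation is that since $\mathbf{S}=\mathbf{W}_{1}\mathbf{\Lambda}\mathbf{W}_{1}^{*}$ with $\mathbf{W}_{1}^{*}\mathbf{W}_{1}=\mathbf{I}_{q}$, the Moore--Penrose inverse is $\mathbf{V}=\mathbf{W}_{1}\mathbf{\Lambda}^{-1}\mathbf{W}_{1}^{*}$, so the nonsingular part of the spectral decomposition of $\mathbf{V}$ uses the \emph{same} $\mathbf{W}_{1}$ and the diagonal matrix $\mathbf{M}=\diag(\mu_{1},\dots,\mu_{q})$ with $\mu_{i}=\lambda_{i}^{-1}$.

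First I would write, by Proposition \ref{lemsd},
$$(d\mathbf{S}) = 2^{-q}\pi^{\tau}\prod_{i=1}^{q}\lambda_{i}^{\beta(m-q)}\prod_{i<j}^{q}(\lambda_{i}-\lambda_{j})^{\beta}(d\mathbf{\Lambda})\wedge(\mathbf{W}_{1}^{*}d\mathbf{W}_{1})$$
and, analogously applied to $\mathbf{V}\in\mathfrak{S}_{m}^{+\beta}(q)$,
$$(d\mathbf{V}) = 2^{-q}\pi^{\tau}\prod_{i=1}^{q}\mu_{i}^{\beta(m-q)}\prod_{i<j}^{q}(\mu_{i}-\mu_{j})^{\beta}(d\mathbf{M})\wedge(\mathbf{W}_{1}^{*}d\mathbf{W}_{1}).$$
Next, ignoring sign, I would compute the elementary Jacobians induced by $\mu_{i}=1/\lambda_{i}$: namely $(d\mathbf{M})=\prod_{i}\lambda_{i}^{-2}(d\mathbf{\Lambda})$, $\mu_{i}^{\beta(m-q)}=\lambda_{i}^{-\beta(m-q)}$, and
$$\prod_{i<j}(\mu_{i}-\mu_{j})^{\beta} = \prod_{i<j}\frac{(\lambda_{j}-\lambda_{i})^{\beta}}{(\lambda_{i}\lambda_{j})^{\beta}} = \Bigl(\prod_{i}\lambda_{i}^{-\beta(q-1)}\Bigr)\prod_{i<j}(\lambda_{i}-\lambda_{j})^{\beta}$$
up to sign, since each $\lambda_{i}$ appears $q-1$ times in $\prod_{i<j}\lambda_{i}\lambda_{j}$.

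Finally I would solve the first display for $(d\mathbf{\Lambda})\wedge(\mathbf{W}_{1}^{*}d\mathbf{W}_{1})$ and substitute into the second, whereupon the factors $2^{-q}\pi^{\tau}$ and the Vandermonde $\prod_{i<j}(\lambda_{i}-\lambda_{j})^{\beta}$ cancel, leaving only a product of powers of $\lambda_{i}$. Collecting exponents gives
$$-\beta(m-q)-\beta(q-1)-2-\beta(m-q) = \beta(-2m+q+1)-2,$$
which yields \eqref{gis}. The only minor subtlety is that ordering the $\mu_{i}$ in decreasing order reverses the ordering of the $\lambda_{i}$, producing a sign in both $(d\mathbf{M})$ and in the Vandermonde term; but these are absorbed into the standing convention of ignoring signs of exterior forms, so no genuine obstacle arises---the proof is essentially a bookkeeping exercise in the exponents of $\lambda_{i}$.
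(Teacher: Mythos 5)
Your proposal is correct and follows essentially the same route as the paper's own proof: both apply Proposition \ref{lemsd} to $\mathbf{S}$ and to $\mathbf{V}=\mathbf{W}_{1}\mathbf{\Lambda}^{-1}\mathbf{W}_{1}^{*}$, use $(d\mathbf{\Lambda}^{-1})=\prod_{i}\lambda_{i}^{-2}(d\mathbf{\Lambda})$ together with the identity $\prod_{i<j}(\lambda_{i}^{-1}-\lambda_{j}^{-1})^{\beta}=\prod_{i}\lambda_{i}^{-\beta(q-1)}\prod_{i<j}(\lambda_{i}-\lambda_{j})^{\beta}$ (up to sign), and then eliminate the common factor $(d\mathbf{\Lambda})\wedge(\mathbf{W}_{1}^{*}d\mathbf{W}_{1})$. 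Your exponent bookkeeping, $-2\beta(m-q)-\beta(q-1)-2=\beta(-2m+q+1)-2$, matches the stated result.
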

\begin{proof}
The proof follows by observing that if $\mathbf{S} = \mathbf{W}_{1} \mathbf{\Lambda}
\mathbf{W}^{*}_{1}$ is the nonsingular part of the spectral decomposition of $\mathbf{S}$, then
$\mathbf{V} = \mathbf{S}^{+} = \mathbf{W}_{1} \mathbf{\Lambda}^{-1} \mathbf{W}^{*}_{1}$.
Therefore, from (\ref{sd})
\begin{equation}\label{31}
    (d\mathbf{S}) = 2^{-q} \pi^{\tau} \prod_{i = 1}^{q} \lambda_{i}^{\beta(m-q)}\prod_{i < j}^{q}
    (\lambda_{i} - \lambda_{j})^{\beta}
    (d\mathbf{\Lambda})\wedge(\mathbf{W}_{1}^{*}d\mathbf{W}_{1}).
\end{equation}
And
$$
  (d\mathbf{V}) = 2^{-q} \pi^{\tau} \prod_{i =1}^{q} \lambda_{i}^{-\beta(m-q)-2} \prod_{i < j}
    \left(\lambda_{i}^{-1} - \lambda_{j}^{-1}\right)^{\beta}(\mathbf{W}^{*}_{1}d\mathbf{W}_{1})
    \wedge (d\mathbf{\Lambda}),
$$
taking into account that (ignoring the sign),
$$
  (d\mathbf{\Lambda}^{-1}) = \bigwedge_{i = 1}^{q} d\lambda_{i}^{-1} = \bigwedge_{i = 1}^{q} (-1)
    \frac{d \lambda_{i}}{\lambda_{i}^{2}}= \prod_{i =1}^{q} \lambda_{i}^{-2}
    \bigwedge_{i = 1}^{q} d\lambda_{i} = \prod_{i =1}^{q} \lambda_{i}^{-2}
    (d\mathbf{\Lambda}).
$$
Then,
\begin{equation}\label{32}
   (\mathbf{W}^{*}_{1}d\mathbf{W}_{1})\wedge (d\mathbf{\Lambda}) =  2^{q} \pi^{-\tau} \left [
   \prod_{i =1}^{q} \lambda_{i}^{-\beta(m-q)-2} \prod_{i < j}\left(\lambda_{i}^{-1} -
    \lambda_{j}^{-1}\right)^{\beta} \right ]^{-1} (d\mathbf{V}).
\end{equation}
Finally (ignoring the sign), observe that,
\begin{equation}\label{33}
   \prod_{i < j}\frac{\left(\lambda_{i}^{-1} - \lambda_{j}^{-1}\right)^{\beta}}{(\lambda_{i} -
   \lambda_{j})^{\beta}} = \prod_{i < j} \frac{1}{\left(\lambda_{i} \lambda_{j}\right)^{\beta}} = \prod_{i =
   1}^{q}\lambda_{i}^{-\beta(q-1)}.
\end{equation}
The desired result is obtained by substituting (\ref{32}) and (\ref{33}) in (\ref{31}). \qed
\end{proof}

\begin{thm}[Moore-Penrose inverse]\label{theoig}
Let $\mathbf{X} \in \mathfrak{L}_{n,m}^{+\beta}(q)$ as in Proposition \ref{lemsvd}. Then
ignoring the sign, if $\mathbf{Y} = \mathbf{X}^{+}$
\begin{equation}\label{gi}
    (d\mathbf{Y}) = \prod_{i = 1}^{q} d_{i}^{-2\beta(m + n -q)}(d\mathbf{X}).
\end{equation}
\end{thm}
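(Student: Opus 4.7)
The plan is to mirror the proof of Theorem \ref{theoi}, using the SVD of Proposition \ref{lemsvd} in place of the spectral decomposition. First I would write the nonsingular SVD as $\mathbf{X} = \mathbf{V}_1 \mathbf{D} \mathbf{W}_1^{*}$ and observe that the Moore-Penrose inverse satisfies
$$
  \mathbf{Y} = \mathbf{X}^{+} = \mathbf{W}_1 \mathbf{D}^{-1} \mathbf{V}_1^{*},
$$
which is itself a (nonsingular part of a) SVD of $\mathbf{Y} \in \mathfrak{L}_{m,n}^{+\beta}(q)$ with singular values $d_1^{-1}, \dots, d_q^{-1}$ and the roles of the left and right semi-orthogonal factors exchanged. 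Thus, applying Proposition \ref{lemsvd} once to $\mathbf{X}$ and once to $\mathbf{Y}$ will give two expressions that can be compared directly.

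Next, I would compute $(d\mathbf{Y})$ from (\ref{svd}) applied to $\mathbf{Y}$: the singular-value exponent contributes $\prod_i d_i^{-\beta(n+m-2q+1)+1}$, the Vandermonde factor becomes $\prod_{i<j}(d_i^{-2}-d_j^{-2})^{\beta}$, and $(d\mathbf{D}^{-1})$ is handled exactly as in the proof of Theorem \ref{theoi} by $d\lambda_i^{-1} = -\lambda_i^{-2}d\lambda_i$, yielding (up to sign) $\prod_i d_i^{-2}(d\mathbf{D})$. The differential form $(\mathbf{W}_1^{*}d\mathbf{W}_1)\wedge(\mathbf{V}_1^{*}d\mathbf{V}_1)$ agrees with the one appearing in (\ref{svd}) for $\mathbf{X}$ up to reordering, which only affects the sign.

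Then I would use the identity
$$
  \prod_{i<j}^{q}(d_i^{-2}-d_j^{-2})^{\beta} = \prod_{i<j}^{q}\frac{(d_j^2-d_i^2)^{\beta}}{(d_i d_j)^{2\beta}} = \pm\,\prod_{i<j}^{q}(d_i^2-d_j^2)^{\beta}\,\prod_{i=1}^{q}d_i^{-2\beta(q-1)},
$$
which is the exact analogue of (\ref{33}) in the proof of Theorem \ref{theoi}. Solving (\ref{svd}) for $(d\mathbf{D})\wedge(\mathbf{V}_1^{*}d\mathbf{V}_1)\wedge(\mathbf{W}_1^{*}d\mathbf{W}_1)$ in terms of $(d\mathbf{X})$ and substituting into the expression obtained for $(d\mathbf{Y})$, the constants $2^{-q}\pi^{\tau}$ and the Vandermonde factors $\prod_{i<j}(d_i^2-d_j^2)^{\beta}$ cancel, leaving only a product of powers of $d_i$.

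The last step is pure bookkeeping of exponents: collecting the contributions $-\beta(n+m-2q+1)+1$ (from applying (\ref{svd}) to $\mathbf{Y}$), $-2\beta(q-1)$ (from the Vandermonde ratio), $-2$ (from $(d\mathbf{D}^{-1})$), and $-\beta(n+m-2q+1)+1$ (from inverting (\ref{svd}) for $\mathbf{X}$), the total exponent on each $d_i$ should simplify to $-2\beta(m+n-q)$, matching (\ref{gi}). The only real obstacle is avoiding arithmetic slips while tracking these four exponent contributions and keeping the sign conventions consistent; no new geometric ideas are needed beyond those already used for Theorem \ref{theoi}.
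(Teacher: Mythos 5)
Your proposal is correct and is essentially the paper's own argument: the paper proves Theorem \ref{theoig} by the one-line remark that it is analogous to Theorem \ref{theoi} with (\ref{svd}) in place of (\ref{sd}), which is exactly the computation you carry out via $\mathbf{Y}=\mathbf{W}_{1}\mathbf{D}^{-1}\mathbf{V}_{1}^{*}$. Your exponent bookkeeping also checks out, since $-2[\beta(n+m-2q+1)-1]-2\beta(q-1)-2=-2\beta(m+n-q)$, in agreement with (\ref{gi}).
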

\begin{proof}
The proof is analogous to that given for Theorem \ref{theoi}, using (\ref{svd}). \qed
\end{proof}

Theorems \ref{theoi} and \ref{theoig} were obtained by \citet{dgm:97} and \citet{lx:10} for the
real and quaternion cases, respectively.

The next result was proposed by \citet{u:94} in the real case as a conjecture. Subsequently,
\citet{dg:97} proposed an indirect proof of this conjecture. Later \citet{jdggj:09b} provided
an alternative proof based on the exterior product, also in the real case. In 2010,
\citet{lx:10} extended this result to the quaternion case, generalising the indirect proof
stated in \citet{dg:97}. We now propose two alternative statements of these results for real
normed division algebras.

\begin{thm}[First Uhlig's conjecture via $SVD$]\label{theobf}
Let $\mathbf{X}, \mathbf{Y} \in \mathfrak{S}_{m}^{+\beta}(n)$ such that $\mathbf{X} =
\mathbf{B}^{*}\mathbf{YB}$, where $\mathbf{B} \in \mathfrak{L}_{m,m}^{\beta}(m)$. In addition,
consider the nonsingular part of the SD, $\mathbf{X} = \mathbf{G}_{1} \mathbf{\Delta}
\mathbf{G}_{1}^{*}$ and $\mathbf{Y} = \mathbf{H}_{1} \mathbf{\Lambda} \mathbf{H}_{1}^{*}$,
where $\mathbf{\Delta} = \diag(\delta_{1}, \dots,\delta_{n}), \mathbf{\Lambda} =
\diag(\lambda_{1}, \dots,\lambda_{n}) \in \mathcal{D}_{n}^{1}$ and $\mathbf{G}_{1},
\mathbf{H}_{1} \in \mathcal{V}_{n,m}^{\beta}$. Then
\begin{eqnarray}\label{fucSVD1}
  (d\mathbf{X}) &=& |\mathbf{B}|^{\beta n} |\mathbf{G}^{*}_{1}\mathbf{B}^{*}\mathbf{H}_{1}|^{\beta(m-n-1) +2}
                     (d\mathbf{Y}) \\
   &=& |\mathbf{B}|^{\beta n} |\mathbf{H}^{*}_{1}\mathbf{B}\mathbf{G}_{1}|^{\beta(m-n-1) +2}
                     (d\mathbf{Y}) \\ \label{fucSVD2}
   &=& |\mathbf{B}|^{\beta n} |\mathbf{\Delta}|^{\beta(m-n-1)/2 +1}
                    |\mathbf{\Lambda}|^{-\beta(m-n-1)/2 - 1} (d\mathbf{Y}),
\end{eqnarray}
with
$$
  (d\mathbf{Y}) = 2^{-n} \pi^{\tau} \prod_{i = 1}^{n} \lambda_{i}^{\beta(m-n)}
    \prod_{i < j}^{n} (\lambda_{i} - \lambda_{j})^{\beta}
    (d\mathbf{\Lambda})\wedge(\mathbf{H}_{1}^{*}d\mathbf{H}_{1}).
$$
\end{thm}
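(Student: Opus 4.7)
The plan is to derive the formula by applying Proposition \ref{lemsd} to both $\mathbf{X}$ and $\mathbf{Y}$ and then using the constraint $\mathbf{X}=\mathbf{B}^{*}\mathbf{Y}\mathbf{B}$ to relate the two spectral decompositions. The last displayed line for $(d\mathbf{Y})$ in the statement is just Proposition \ref{lemsd} applied directly to $\mathbf{Y}$; the real work is obtaining the ratio $(d\mathbf{X})/(d\mathbf{Y})$ in the first displayed line, after which the other two lines follow from simple algebra.

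The first step I would take is algebraic. Since $\mathbf{B}$ is nonsingular, $\mathbf{X}$ and $\mathbf{Y}$ have the same rank $n$, and substituting both SDs into $\mathbf{X}=\mathbf{B}^{*}\mathbf{Y}\mathbf{B}$ yields $\mathbf{G}_{1}\mathbf{\Delta}\mathbf{G}_{1}^{*} = (\mathbf{B}^{*}\mathbf{H}_{1})\mathbf{\Lambda}(\mathbf{B}^{*}\mathbf{H}_{1})^{*}$. The column space of $\mathbf{B}^{*}\mathbf{H}_{1}$ coincides with that of $\mathbf{G}_{1}$ (both equal the range of $\mathbf{X}$), so there is a unique nonsingular $n\times n$ matrix $\mathbf{T}$ with $\mathbf{B}^{*}\mathbf{H}_{1} = \mathbf{G}_{1}\mathbf{T}$; left-multiplying by $\mathbf{G}_{1}^{*}$ identifies $\mathbf{T} = \mathbf{G}_{1}^{*}\mathbf{B}^{*}\mathbf{H}_{1}$, and substituting back produces the key identity $\mathbf{T}\mathbf{\Lambda}\mathbf{T}^{*} = \mathbf{\Delta}$. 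Taking $n\times n$ determinants gives $|\mathbf{T}|^{2} = |\mathbf{\Delta}|/|\mathbf{\Lambda}|$. This immediately shows the three displayed forms are equivalent: the second follows because $|\mathbf{H}_{1}^{*}\mathbf{B}\mathbf{G}_{1}|$ is the adjoint of $|\mathbf{G}_{1}^{*}\mathbf{B}^{*}\mathbf{H}_{1}|$, and the third by writing $|\mathbf{T}|^{\beta(m-n-1)+2} = |\mathbf{\Delta}|^{\beta(m-n-1)/2+1}\,|\mathbf{\Lambda}|^{-\beta(m-n-1)/2-1}$.

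The second (analytic) step is to compute the Jacobian of the linear map $d\mathbf{Y}\mapsto d\mathbf{X} = \mathbf{B}^{*}\,d\mathbf{Y}\,\mathbf{B}$ restricted to the tangent space of $\mathfrak{S}_{m}^{+\beta}(n)$, parametrised via the SD. Decomposing the tangent directions for $\mathbf{Y}$ as $\mathbf{H}_{1}\,d\mathbf{\Lambda}\,\mathbf{H}_{1}^{*} + d\mathbf{H}_{1}\,\mathbf{\Lambda}\,\mathbf{H}_{1}^{*} + \mathbf{H}_{1}\,\mathbf{\Lambda}\,d\mathbf{H}_{1}^{*}$ (subject to $\mathbf{H}_{1}^{*}d\mathbf{H}_{1}$ being skew-Hermitian), and likewise for $\mathbf{X}$, the pull-through via $\mathbf{B}^{*}(\cdot)\mathbf{B}$ combined with the identification $\mathbf{B}^{*}\mathbf{H}_{1}=\mathbf{G}_{1}\mathbf{T}$ splits into a ``frame'' part (Jacobian $|\mathbf{B}|^{\beta n}$ from the $\beta n$-dimensional restriction of $\mathbf{H}_{1}\mapsto\mathbf{B}^{*}\mathbf{H}_{1}$ to the tangent space of $\mathcal{V}_{n,m}^{\beta}$, rescaled by $\mathbf{T}$) and an ``eigenvalue'' part (the conjugation $\mathbf{T}\mathbf{\Lambda}\mathbf{T}^{*}=\mathbf{\Delta}$ combined with the eigenvalue-polynomial weights in Proposition \ref{lemsd}, yielding $|\mathbf{T}|^{\beta(m-n-1)+2}$). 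Substituting into the SD formula for $(d\mathbf{X})$ and dividing by the SD formula for $(d\mathbf{Y})$ gives the first displayed line.

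The main obstacle will be the careful bookkeeping in the analytic step: distinguishing the real-diagonal differentials $d\lambda_{i}, d\delta_{i}$ from the off-diagonal Hermitian ones, correctly enforcing the skew-Hermitian constraints on $\mathbf{G}_{1}^{*}d\mathbf{G}_{1}$ and $\mathbf{H}_{1}^{*}d\mathbf{H}_{1}$, and verifying that the $\beta$-dependent exponent $\beta(m-n-1)+2$ emerges precisely as the combination of the Stiefel-polynomial ratios of Proposition \ref{lemsd} with the conjugation-by-$\mathbf{T}$ Jacobian, rather than by any ad hoc power count.
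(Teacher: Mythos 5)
Your first, algebraic step is sound and is indeed part of what is needed: from $\mathbf{X}=\mathbf{B}^{*}\mathbf{Y}\mathbf{B}$ one gets $\mathrm{span}(\mathbf{B}^{*}\mathbf{H}_{1})=\mathrm{span}(\mathbf{G}_{1})$, hence $\mathbf{B}^{*}\mathbf{H}_{1}=\mathbf{G}_{1}\mathbf{T}$ with $\mathbf{T}=\mathbf{G}_{1}^{*}\mathbf{B}^{*}\mathbf{H}_{1}$, $\mathbf{T}\mathbf{\Lambda}\mathbf{T}^{*}=\mathbf{\Delta}$ and $|\mathbf{T}|^{2}=|\mathbf{\Delta}|\,|\mathbf{\Lambda}|^{-1}$, which shows the three displayed right-hand sides are mutually equivalent. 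The genuine gap is in your second step, which is the entire content of the theorem: the claim that the Jacobian of $d\mathbf{Y}\mapsto\mathbf{B}^{*}\,d\mathbf{Y}\,\mathbf{B}$ in SD coordinates ``splits'' into a frame factor $|\mathbf{B}|^{\beta n}$ and an eigenvalue factor $|\mathbf{T}|^{\beta(m-n-1)+2}$ is asserted, not derived, and the justification you sketch for it does not hold up. The map $\mathbf{H}_{1}\mapsto\mathbf{B}^{*}\mathbf{H}_{1}$ does not carry $\mathcal{V}_{n,m}^{\beta}$ into itself, the tangent space of $\mathcal{V}_{n,m}^{\beta}$ has real dimension $\beta mn-\beta n(n-1)/2-n$ (not $\beta n$), and in a correct argument the factor $|\mathbf{B}|^{\beta n}$ arises as the Jacobian of the linear map $\mathbf{Z}\mapsto\mathbf{Z}\mathbf{B}$ acting on a full rectangular factor of $\mathbf{Y}$ (\citet[Proposition 1]{dggj:11}), not from any restriction of the frame map. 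Likewise, the ratio of the eigenvalue weights $\prod_{i}\delta_{i}^{\beta(m-n)}\prod_{i<j}(\delta_{i}-\delta_{j})^{\beta}$ to $\prod_{i}\lambda_{i}^{\beta(m-n)}\prod_{i<j}(\lambda_{i}-\lambda_{j})^{\beta}$ is not a function of $|\mathbf{T}|$ alone, since the $\delta_{i}$ depend jointly on $(\mathbf{\Lambda},\mathbf{H}_{1},\mathbf{B})$; so the exponent $\beta(m-n-1)+2$, which is precisely what must be proven, never actually emerges from your argument, and the ``careful bookkeeping'' you defer is not a routine verification but the whole missing computation (with the skew-Hermitian constraints and the implicit dependence of $(\mathbf{\Delta},\mathbf{G}_{1})$ on $(\mathbf{\Lambda},\mathbf{H}_{1})$ all in play).

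The paper's route (by reference to \citet{jdggj:09b}, and exactly parallel to the written-out proof of Theorem \ref{theo34}) sidesteps this computation: write $\mathbf{Y}=\mathbf{Z}^{*}\mathbf{Z}$ with $\mathbf{Z}$ of order $n\times m$ and rank $n$, so that $\mathbf{X}=\mathbf{\Phi}^{*}\mathbf{\Phi}$ with $\mathbf{\Phi}=\mathbf{Z}\mathbf{B}$; Corollary \ref{lemW} (this is where Propositions \ref{lemsvd} and \ref{lemsd} enter) gives $(d\mathbf{Z})=2^{-n}|\mathbf{\Lambda}|^{\beta(n-m+1)/2-1}(d\mathbf{Y})\wedge(\mathbf{V}_{1}^{*}d\mathbf{V}_{1})$ and $(d\mathbf{\Phi})=2^{-n}|\mathbf{\Delta}|^{\beta(n-m+1)/2-1}(d\mathbf{X})\wedge(\mathbf{Q}_{1}^{*}d\mathbf{Q}_{1})$; combining these with $(d\mathbf{\Phi})=|\mathbf{B}|^{\beta n}(d\mathbf{Z})$ and cancelling the two Haar measures on $\mathfrak{U}^{\beta}(n)$ (uniqueness of Haar measure, as in the proof of Theorem \ref{theo34}) yields (\ref{fucSVD2}) directly, after which your algebraic step recovers (\ref{fucSVD1}). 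If you insist on the direct tangent-space route, you must actually carry out the exterior-product calculation; as written, your second step presupposes the answer rather than proving it.
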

\begin{proof}
Considering Propositions \ref{lemsvd}, \ref{lemsd} and  \citet[Proposition 1]{dggj:11} the
proof is analogous to that given in \citet{jdggj:09b} for the real case.
\end{proof}

\begin{prop}[ $QR$ decomposition, $QRD$] \label{teoqr} Let $\mathbf{X} \in
\mathfrak{L}_{n,m}^{+\beta}(q)$, then there exists a matrix $\mathbf{H}_{1} \in
\mathcal{V}_{q,n}^{\beta}$ and an upper quasi-triangular matrix  $\mathbf{T} \in
\mathfrak{T}${\tiny U}$_{m,q}^{+\beta}$ such that $\mathbf{X} = \mathbf{H}_{1}\mathbf{T}$ is
the nonsingular part of the $QR$ decomposition and
\begin{equation}
\label{qr}
   (d\mathbf{X}) = \prod_{i = 1}^{q} t_{ii}^{\beta(n-i+1)-1}(\mathbf{H}^{*}_{1}d\mathbf{H}_{1})\wedge(d\mathbf{T}).
\end{equation}
\end{prop}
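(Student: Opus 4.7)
My plan is to reduce the singular $QRD$ to the nonsingular $QR$ applied to a full-column-rank submatrix, together with an isometric parametrisation of the remaining columns.

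First, after right-multiplication by a permutation matrix — which leaves $(d\mathbf{X})$ invariant since permutations lie in $\mathfrak{U}^{\beta}(m)$, as noted in the discussion around (\ref{X}) — I would assume that the first $q$ columns of $\mathbf{X}$ form an $n\times q$ block $\mathbf{X}_{1}$ of rank $q$, with the remaining columns collected in $\mathbf{X}_{2}$ of size $n\times(m-q)$. Applying the nonsingular $QR$ to the full-column-rank block yields $\mathbf{X}_{1} = \mathbf{H}_{1}\mathbf{T}_{1}$ with $\mathbf{H}_{1}\in\mathcal{V}_{q,n}^{\beta}$ and $\mathbf{T}_{1}\in\mathfrak{T}${\tiny U}$_{q,q}^{+\beta}$. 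Because $\mathbf{X}$ has rank $q$, each column of $\mathbf{X}_{2}$ lies in the column span of $\mathbf{X}_{1}$, and hence of $\mathbf{H}_{1}$; setting $\mathbf{T}_{2} = \mathbf{H}_{1}^{*}\mathbf{X}_{2}$ gives $\mathbf{X}_{2} = \mathbf{H}_{1}\mathbf{T}_{2}$, so $\mathbf{X} = \mathbf{H}_{1}\mathbf{T}$ with $\mathbf{T} = [\mathbf{T}_{1}\,|\,\mathbf{T}_{2}] \in \mathfrak{T}${\tiny U}$_{m,q}^{+\beta}$.

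For the Jacobian I would use the block factorisation $(d\mathbf{X}) \equiv (d\mathbf{X}_{1}) \wedge (d\mathbf{X}_{2})$ valid on the singular manifold by (\ref{X}). For the first factor, the classical full-column-rank $QR$ Jacobian over real normed division algebras (the nonsingular analogue of the present proposition, available in \citet{dggj:11}) gives
$$
  (d\mathbf{X}_{1}) = \prod_{i=1}^{q} t_{ii}^{\beta(n-i+1)-1}(\mathbf{H}_{1}^{*}d\mathbf{H}_{1}) \wedge (d\mathbf{T}_{1}),
$$
where $(\mathbf{H}_{1}^{*}d\mathbf{H}_{1})$ is the Stiefel volume form of $\mathcal{V}_{q,n}^{\beta}$. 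For the second factor, $\mathbf{H}_{1}$ is already pinned by $\mathbf{X}_{1}$, and the constraint $\mathbf{X}_{2} = \mathbf{H}_{1}\mathbf{T}_{2}$ restricts $\mathbf{X}_{2}$ to a $q(m-q)\beta$-dimensional submanifold on which the semi-orthogonality $\mathbf{H}_{1}^{*}\mathbf{H}_{1} = \mathbf{I}_{q}$ turns the map $\mathbf{T}_{2}\mapsto \mathbf{H}_{1}\mathbf{T}_{2}$ into an isometry, so the induced Hausdorff measure satisfies $(d\mathbf{X}_{2}) = (d\mathbf{T}_{2})$.

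Combining the two factors and using $(d\mathbf{T}_{1}) \wedge (d\mathbf{T}_{2}) \equiv (d\mathbf{T})$ from the convention of the excerpt yields (\ref{qr}) up to an irrelevant sign from reordering factors. The main obstacle is the rigorous justification that the second step produces no extra Jacobian factor. Writing $d\mathbf{X}_{2} = (d\mathbf{H}_{1})\mathbf{T}_{2} + \mathbf{H}_{1}d\mathbf{T}_{2}$ and decomposing $d\mathbf{H}_{1} = \mathbf{H}_{1}(\mathbf{H}_{1}^{*}d\mathbf{H}_{1}) + \mathbf{H}_{2}(\mathbf{H}_{2}^{*}d\mathbf{H}_{1})$ with $\mathbf{H}_{2}$ completing $\mathbf{H}_{1}$ to an element of $\mathfrak{U}^{\beta}(n)$, both the $\mathbf{H}_{1}^{*}d\mathbf{H}_{1}$ and the $\mathbf{H}_{2}^{*}d\mathbf{H}_{1}$ directions have already been saturated by $(d\mathbf{X}_{1})$ through the Stiefel form, so the wedge in $d\mathbf{X}_{1}\wedge d\mathbf{X}_{2}$ retains only the $\mathbf{H}_{1}d\mathbf{T}_{2}$ contribution, whose wedge equals $(d\mathbf{T}_{2})$ by the isometric nature of $\mathbf{H}_{1}$.
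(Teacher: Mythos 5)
The paper itself offers no proof of Proposition \ref{teoqr}: it only points to \citet{dggg:05b} (real singular case) and \citet{lx:09} (quaternion nonsingular case), so your argument must stand on its own, and as written it has a genuine gap at its central step, namely the identification $(d\mathbf{X})\equiv(d\mathbf{X}_{1})\wedge(d\mathbf{X}_{2})$. Equation (\ref{X}) does not say this: it retains only the functionally independent blocks, i.e. $(d\mathbf{X})\equiv(d\mathbf{X}_{1})\wedge(d\mathbf{X}_{12})$, where $\mathbf{X}_{12}$ is the upper $q\times(m-q)$ block of $\mathbf{X}_{2}$. Under that convention your own computation changes: modulo the $d\mathbf{H}_{1}$ directions already saturated by $(d\mathbf{X}_{1})$, one has $d\mathbf{X}_{12}\equiv\mathbf{H}_{11}\,d\mathbf{T}_{2}$ with $\mathbf{H}_{11}$ the upper $q\times q$ block of $\mathbf{H}_{1}$, so an extra factor of the type $|\mathbf{H}_{11}^{*}\mathbf{H}_{11}|^{\beta(m-q)/2}$ appears and (\ref{qr}) would not follow. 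Your fallback, reading $(d\mathbf{X}_{2})$ as the Hausdorff measure induced on the fibre $\{\mathbf{H}_{1}\mathbf{T}_{2}\}$ and multiplying by the base measure $(d\mathbf{X}_{1})$, is a different object again: the fibres tilt as $\mathbf{H}_{1}$ varies (the term $(d\mathbf{H}_{1})\mathbf{T}_{2}$ has a component along $\mathbf{H}_{2}$, orthogonal to the fibre), so base-times-fibre need not be the surface measure of the manifold. A rank-one check with $n=m=2$, $q=1$, $\beta=1$, $\mathbf{X}=(\cos\theta,\sin\theta)^{T}(t_{11},t_{12})$, makes the three candidates explicit: the metric surface measure is $\sqrt{t_{11}^{2}+t_{12}^{2}}\,d\theta\wedge dt_{11}\wedge dt_{12}$, the convention (\ref{X}) gives $t_{11}|\cos\theta|\,d\theta\wedge dt_{11}\wedge dt_{12}$, and your product gives $t_{11}\,d\theta\wedge dt_{11}\wedge dt_{12}$, which is the right-hand side of (\ref{qr}). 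These genuinely differ, and deciding which of them $(d\mathbf{X})$ denotes is precisely the content of the proposition (and of the long-standing discrepancies in this literature, cf. \citet{dg:07}); your argument silently adopts the convention that makes (\ref{qr}) true rather than proving it for the measure used elsewhere in the paper.

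Relatedly, the closing ``saturation'' step is not a legitimate exterior-algebra manipulation as stated: $d\mathbf{X}_{2}$ has $\beta n(m-q)$ entries, so ``$d\mathbf{X}_{1}\wedge d\mathbf{X}_{2}$'' involves $\beta nm$ one-forms on a manifold of real dimension $\beta(nq+mq-q^{2})$ and vanishes identically; one must first select which $\beta q(m-q)$ functionally independent entries of $\mathbf{X}_{2}$ (equivalently, which coordinates in a rotated frame) are kept, and the Jacobian factor depends on that selection, as the example shows. To close the gap you would need to fix the measure explicitly, for instance defining $(d\mathbf{X})$ on the rank-$q$ manifold as the exterior product of the entries of $\mathbf{H}_{1}^{*}d\mathbf{X}$ together with the entries of the first $q$ columns of $\mathbf{H}_{2}^{*}d\mathbf{X}$ (which is what produces $\prod_{i}t_{ii}^{\beta(n-i+1)-1}$, since on the tangent space $\mathbf{H}_{2}^{*}d\mathbf{X}$ restricted to those columns equals $(\mathbf{H}_{2}^{*}d\mathbf{H}_{1})\mathbf{T}_{1}$), and then check that this normalisation is the one under which Propositions \ref{lemsvd} and \ref{teoch} are stated, since (\ref{qr}) is later combined with them. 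The existence part of your argument ($\mathbf{X}_{1}=\mathbf{H}_{1}\mathbf{T}_{1}$, $\mathbf{T}_{2}=\mathbf{H}_{1}^{*}\mathbf{X}_{2}$ after a harmless column permutation) and the use of the nonsingular QR Jacobian for $(d\mathbf{X}_{1})$ are fine.
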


\begin{prop} [Cholesky's decomposition, $CHD$]\label{teoch} Let $\mathbf{S}
\in \mathfrak{S}_{m}^{+\beta}(q)$. Then $\mathbf{S} = \mathbf{T}^{*}\mathbf{T}$, where
$\mathbf{T} \in \mathfrak{T}${\tiny U}$_{m,q}^{+\beta}$ such that
$$
  \mathbf{T} = \left (\mathbf{T}_{1} \ \mathbf{T}_{2} \right),
$$
with $\mathbf{T}_{1} \in \mathfrak{T}${\tiny U}$_{q,q}^{+\beta}$, $q \times q$ upper triangular
matrix. Also, let $\mathbf{X} \in \mathfrak{L}_{m,n}$, with $\mathbf{X} =
\mathbf{H}_{1}\mathbf{T}$ ($QR$ Decomposition) and $\mathbf{S} = \mathbf{X}^{*}\mathbf{X} =
\mathbf{T}^{*}\mathbf{T}$ (Cholesky decomposition) such that
$$
   \mathbf{S} = \left (
          \begin{array}{cc}
            \mathbf{T}^{*}_{1}\mathbf{T}_{1} &  \mathbf{T}^{*}_{1}\mathbf{T}_{2}\\
            \mathbf{T}^{*}_{2}\mathbf{T}_{1} &  \mathbf{T}^{*}_{2}\mathbf{T}_{2}\\
          \end{array}
       \right ) =
       \left (
          \begin{array}{cc}
            \build{\mathbf{S}_{11}}{}{q \times q} & \build{\mathbf{S}_{12}}{}{q \times m-q} \\
            \build{\mathbf{S}_{21}}{}{m-q \times q} & \build{\mathbf{S}_{22}}{}{m-q \times m-q} \\
          \end{array}
       \right ) \qquad \mbox{with}  \qquad \mathbf{S}_{11} \in \mathfrak{P}_{q}^{\beta}.
$$
Then
\begin{enumerate}
   \item
   \begin{equation}\label{teoch1}
    (d\mathbf{S}) = 2^{q} \displaystyle\prod_{i = 1}^{q} t_{ii}^{\beta(m - i) + 1}
    (d\mathbf{T}),
   \end{equation}
   \item Also,
    \begin{eqnarray}
      (d\mathbf{X}) &=& 2^{-q} |\mathbf{T}^{*}_{1}\mathbf{T}_{1}|^{\beta(n - m + 1)/2 - 1}
      (d\mathbf{S})\wedge(\mathbf{H}^{*}_{1}d\mathbf{H}_{1}),
      \\ \label{teoch2}
       &=& 2^{-q} |\mathbf{S}_{11}|^{\beta(n - m + 1)/2 - 1}
       (d\mathbf{S})\wedge(\mathbf{H}^{*}_{1}d\mathbf{H}_{1}. \label{teoch3}
    \end{eqnarray}
\end{enumerate}
\end{prop}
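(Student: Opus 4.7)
The plan is to exploit the block structure of $\mathbf{T}$ and $\mathbf{S}$ made explicit in the statement, reducing the singular case to an application of the classical nonsingular Cholesky Jacobian on the upper-left $q \times q$ block plus a linear-Jacobian computation for the off-diagonal block. For part (2), the plan is to combine part (1) with the $QR$ decomposition in Proposition \ref{teoqr}.

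For part (1), I would write $\mathbf{T} = (\mathbf{T}_1\ \mathbf{T}_2)$ with $\mathbf{T}_1 \in \mathfrak{T}${\tiny U}$_{q,q}^{+\beta}$ nonsingular, so that
\[
   \mathbf{S}_{11} = \mathbf{T}_1^{*}\mathbf{T}_1,\qquad \mathbf{S}_{12} = \mathbf{T}_1^{*}\mathbf{T}_2,
\]
and recall from the discussion preceding the proposition that the mathematically independent elements of $\mathbf{S}$ are those of $\mathbf{S}_{11}$ and $\mathbf{S}_{12}$, with $(d\mathbf{S}) = (d\mathbf{S}_{11})\wedge(d\mathbf{S}_{12})$; likewise $(d\mathbf{T}) = (d\mathbf{T}_1)\wedge(d\mathbf{T}_2)$. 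First, I would apply the classical nonsingular Cholesky Jacobian for real normed division algebras to $\mathbf{S}_{11} = \mathbf{T}_1^{*}\mathbf{T}_1$ to obtain $(d\mathbf{S}_{11}) = 2^{q}\prod_{i=1}^{q} t_{ii}^{\beta(q-i)+1}(d\mathbf{T}_1)$. Then, holding $\mathbf{T}_1$ fixed, the map $\mathbf{T}_2 \mapsto \mathbf{S}_{12} = \mathbf{T}_1^{*}\mathbf{T}_2$ is a left multiplication by the $q\times q$ invertible matrix $\mathbf{T}_1^{*}$ on a $q\times(m-q)$ matrix, whose Jacobian for real normed division algebras is $|\mathbf{T}_1^{*}\mathbf{T}_1|^{\beta(m-q)/2} = \prod_{i=1}^{q} t_{ii}^{\beta(m-q)}$. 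Taking the exterior product and collecting the exponent $\beta(q-i)+1+\beta(m-q) = \beta(m-i)+1$ yields (\ref{teoch1}).

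For part (2), I would simply substitute. From Proposition \ref{teoqr},
\[
   (d\mathbf{X}) = \prod_{i=1}^{q} t_{ii}^{\beta(n-i+1)-1}\,(\mathbf{H}_1^{*}d\mathbf{H}_1)\wedge(d\mathbf{T}),
\]
and from (\ref{teoch1}), $(d\mathbf{T}) = 2^{-q}\prod_{i=1}^{q} t_{ii}^{-\beta(m-i)-1}(d\mathbf{S})$. Multiplying the two scalar factors produces $2^{-q}\prod_{i=1}^{q} t_{ii}^{\beta(n-m+1)-2}$, which equals $2^{-q}|\mathbf{T}_1^{*}\mathbf{T}_1|^{\beta(n-m+1)/2 - 1}$ because $|\mathbf{T}_1^{*}\mathbf{T}_1| = \prod_{i=1}^{q} t_{ii}^{2}$. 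Finally, using $\mathbf{S}_{11} = \mathbf{T}_1^{*}\mathbf{T}_1$ converts this into (\ref{teoch3}).

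I do not anticipate any deep obstacle; the argument is essentially bookkeeping. The one place that requires care is the \emph{scope} of differentials: one must use the singular-case convention introduced earlier in Section \ref{sec2}, namely that $(d\mathbf{S})$ here denotes the exterior product over the independent entries of the blocks $\mathbf{S}_{11}$ and $\mathbf{S}_{12}$ only, and analogously for $(d\mathbf{T})$, so that the block-wise factorisation of the exterior product is legitimate. The exponent arithmetic $\beta(q-i)+1+\beta(m-q) = \beta(m-i)+1$ and $\beta(n-i+1)-1-\beta(m-i)-1 = \beta(n-m+1)-2$ must be checked carefully, but is routine once the blocks are set up. The $\beta$-dependence is handled uniformly because both the nonsingular Cholesky formula and the linear-map Jacobian hold over every real normed division algebra in the same functional form.
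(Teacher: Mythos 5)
Your derivation is correct, and it is worth pointing out that the paper itself gives no proof of Proposition \ref{teoch}: it only refers the reader to \citet{dggg:05b} for the real singular case and \citet{lx:09} for the quaternion nonsingular case. Your argument is therefore a self-contained alternative: for part (1) you split $(d\mathbf{S})=(d\mathbf{S}_{11})\wedge(d\mathbf{S}_{12})$ and $(d\mathbf{T})=(d\mathbf{T}_{1})\wedge(d\mathbf{T}_{2})$ according to the convention fixed in Section \ref{sec2}, apply the nonsingular Cholesky Jacobian to $\mathbf{S}_{11}=\mathbf{T}_{1}^{*}\mathbf{T}_{1}$ and the left-multiplication Jacobian to $\mathbf{S}_{12}=\mathbf{T}_{1}^{*}\mathbf{T}_{2}$, and for part (2) you compose (\ref{teoch1}) with Proposition \ref{teoqr}; the exponent arithmetic checks out, since $\beta(q-i)+1+\beta(m-q)=\beta(m-i)+1$, $\beta(n-i+1)-1-[\beta(m-i)+1]=\beta(n-m+1)-2$, and $|\mathbf{T}_{1}^{*}\mathbf{T}_{1}|=\prod_{i=1}^{q}t_{ii}^{2}$, which also matches the parameter counts ($\mathbf{T}_{1}\leftrightarrow\mathbf{S}_{11}$ with $\beta q(q-1)/2+q$ coordinates, $\mathbf{T}_{2}\leftrightarrow\mathbf{S}_{12}$ with $\beta q(m-q)$). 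The one step you state loosely is ``holding $\mathbf{T}_{1}$ fixed'': strictly, $d\mathbf{S}_{12}=d\mathbf{T}_{1}^{*}\,\mathbf{T}_{2}+\mathbf{T}_{1}^{*}\,d\mathbf{T}_{2}$, and the terms involving differentials of $\mathbf{T}_{1}$ are killed when wedged against $(d\mathbf{S}_{11})$, which is already a top-degree form in exactly those differentials; that block-triangularity of the Jacobian is what legitimises the factorisation, and it deserves one explicit sentence. Beyond that, the usual caveats of the paper apply to your proof as well: signs and ordering of wedge products are ignored, and the $\beta=8$ (octonion) case is only formal.
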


Propositions \ref{teoqr} and \ref{teoch} were studied by \citet{dggg:05b} for the real singular
case and by \citet{lx:09} for the quaternion nonsingular case.

We now study an alternative approach to Theorem \ref{theobf} with respect to an alternative
factorisation measure based on QR and the Cholesky decomposition.

\begin{thm}[First Uhlig's conjecture via $QRD$]\label{theo34} Let $\mathbf{X},
\mathbf{Y} \in \mathfrak{S}_{m}^{+\beta}(n)$, such that $\mathbf{X} =
\mathbf{B}^{*}\mathbf{YB}$, with $\mathbf{B} \in \mathfrak{L}_{m,m}^{+\beta}(m)$ fixed.
Additionally, let $\mathbf{X} = \mathbf{T}^{*}\mathbf{T}$ and $\mathbf{Y} =
\mathbf{L}^{*}\mathbf{L}$ with $\mathbf{T}, \mathbf{L} \in \mathfrak{T}${\tiny
U}$_{m,n}^{+\beta}$, such that $\mathbf{T }= (\mathbf{T}_{1} \ \mathbf{T}_{2} )$ and
$\mathbf{L} = (\mathbf{L}_{1} \ \mathbf{L}_{2})$, where $\mathbf{T}_{1}$ and $\mathbf{L}_{1}$
are $n \times n$ upper triangular matrices. Then
\begin{eqnarray}\label{teo34}
  (d\mathbf{X}) &=& |\mathbf{T}^{*}_{1}\mathbf{T}_{1}|^{\beta(m-n-1)/2 +1}|\mathbf{L}^{*}_{1}\mathbf{L}_{1}|^{-\beta(m-n-1)/2 -1}
  |\mathbf{B}|^{\beta n}(d\mathbf{Y}),
\end{eqnarray}
with
$$
   (d\mathbf{Y})= 2^{n} \prod_{i = 1}^{n} l_{ii}^{\beta(m - i) + 1} (d\mathbf{L}).
$$
\end{thm}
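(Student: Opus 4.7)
The plan is to lift both $\mathbf{X}$ and $\mathbf{Y}$ to a common auxiliary rectangular matrix in $\mathcal{L}_{m,n}^{+\beta}(n)$, so that both sides of the desired identity emerge from two parallel applications of Proposition~\ref{teoch} (item~2), the bridge being the elementary Jacobian of right multiplication by the fixed matrix $\mathbf{B}$. Concretely, I would introduce an auxiliary parameter $\mathbf{H} \in \mathfrak{U}^{\beta}(n)$ and set $\mathbf{M} = \mathbf{H}\mathbf{L}$, where $\mathbf{L}$ is the prescribed Cholesky factor of $\mathbf{Y}$; then $\mathbf{M}^{*}\mathbf{M}=\mathbf{Y}$ and the displayed product is its $QR$ decomposition. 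Put $\mathbf{N} = \mathbf{M}\mathbf{B}$. From $\mathbf{N}^{*}\mathbf{N} = \mathbf{B}^{*}\mathbf{Y}\mathbf{B} = \mathbf{X} = \mathbf{T}^{*}\mathbf{T}$ together with uniqueness of the Cholesky factor, the $QR$ of $\mathbf{N}$ must take the form $\mathbf{N} = \mathbf{H}_{1}\mathbf{T}$ for some $\mathbf{H}_{1} \in \mathfrak{U}^{\beta}(n)$.

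Applying Proposition~\ref{teoch}, item~2, to each of these two $QR$ factorisations yields
\begin{eqnarray*}
   (d\mathbf{M}) &=& 2^{-n}\,|\mathbf{L}_{1}^{*}\mathbf{L}_{1}|^{\beta(n-m+1)/2-1}\,(d\mathbf{Y})\wedge(\mathbf{H}^{*}d\mathbf{H}), \\
   (d\mathbf{N}) &=& 2^{-n}\,|\mathbf{T}_{1}^{*}\mathbf{T}_{1}|^{\beta(n-m+1)/2-1}\,(d\mathbf{X})\wedge(\mathbf{H}_{1}^{*}d\mathbf{H}_{1}).
\end{eqnarray*}
On the other hand, right multiplication by the fixed nonsingular $m \times m$ matrix $\mathbf{B}$ is a linear change of variables on $\mathfrak{F}^{n \times m}$ with Jacobian $|\mathbf{B}|^{\beta n}$, so $(d\mathbf{N}) = |\mathbf{B}|^{\beta n}(d\mathbf{M})$. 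Writing $\mathbf{L}\mathbf{B} = \mathbf{H}'\mathbf{T}$ for its $QR$ (with $\mathbf{H}' \in \mathfrak{U}^{\beta}(n)$ depending only on $\mathbf{L}$ and $\mathbf{B}$), the $QR$ of $\mathbf{N} = \mathbf{H}\mathbf{L}\mathbf{B}$ becomes $\mathbf{N} = (\mathbf{H}\mathbf{H}')\mathbf{T}$, so $\mathbf{H}_{1} = \mathbf{H}\mathbf{H}'$ and hence $(\mathbf{H}_{1}^{*}d\mathbf{H}_{1}) = (\mathbf{H}^{*}d\mathbf{H})$ by right-invariance of the Haar measure on $\mathfrak{U}^{\beta}(n)$. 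Cancelling the common $(\mathbf{H}^{*}d\mathbf{H})$ and the common $2^{-n}$, solving for $(d\mathbf{X})$, and using the elementary rewriting $-\beta(n-m+1)/2+1 = \beta(m-n-1)/2+1$ produces exactly formula~(\ref{teo34}); the explicit form displayed for $(d\mathbf{Y})$ is then nothing but Proposition~\ref{teoch}, item~1, applied to $\mathbf{Y} = \mathbf{L}^{*}\mathbf{L}$.

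The principal obstacle is the implicit nature of the map $\mathbf{L}\mapsto \mathbf{T}$ induced by $\mathbf{X} = \mathbf{B}^{*}\mathbf{L}^{*}\mathbf{L}\mathbf{B}$: the natural move $\mathbf{L}\mapsto \mathbf{L}\mathbf{B}$ does not stay in $\mathfrak{T}${\tiny U}$_{m,n}^{+\beta}$, so a direct Jacobian computation on that lower-dimensional space is awkward. The auxiliary unitary factor $\mathbf{H}$ is precisely what absorbs the ``orthogonal'' degrees of freedom that the projection of the $QR$ of $\mathbf{L}\mathbf{B}$ onto its triangular part discards, reducing the question to a plain Jacobian of right multiplication. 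A secondary point requiring care, as with Theorem~\ref{theobf}, is the right-invariance of $(\mathbf{H}^{*}d\mathbf{H})$ under conjugation by a fixed element of $\mathfrak{U}^{\beta}(n)$ in the octonionic case, which can be asserted only under the caveats flagged in Section~\ref{sec1}.
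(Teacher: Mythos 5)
Your proposal is correct and follows essentially the same route as the paper's own proof: lift $\mathbf{Y}$ to a rectangular matrix (your $\mathbf{M}=\mathbf{H}\mathbf{L}$, the paper's $\mathbf{Z}$ with $\mathbf{Y}=\mathbf{Z}^{*}\mathbf{Z}$), set the companion matrix $\mathbf{N}=\mathbf{M}\mathbf{B}$ (the paper's $\mathbf{\Phi}=\mathbf{Z}\mathbf{B}$), apply Proposition~\ref{teoch} to both factorisations, insert the Jacobian $|\mathbf{B}|^{\beta n}$ of right multiplication by the fixed $\mathbf{B}$, and cancel the unitary differential forms. Your explicit justification of that cancellation via $\mathbf{H}_{1}=\mathbf{H}\mathbf{H}'$ and invariance of $(\mathbf{H}^{*}d\mathbf{H})$ is, if anything, slightly more detailed than the paper's appeal to the uniqueness of the Haar measure.
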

\begin{proof} Let $\mathbf{Z} \in \mathfrak{L}_{n,m}^{+\beta}(n)$, such that $\mathbf{Y} =
\mathbf{Z}^{*}\mathbf{Z}$. Then
\begin{equation}\label{eq1teo34}
    \mathbf{X} = \mathbf{B}^{*}\mathbf{YB} = \mathbf{B}^{*}\mathbf{Z}^{*}\mathbf{ZB} =
    \mathbf{\Phi}^{*}\mathbf{\Phi}, \quad \mbox{with}\quad \mathbf{\Phi} =\mathbf{ZB}.
\end{equation}
from Proposition \ref{teoch}, equation \ref{teoch2}
\begin{equation}
    (d\mathbf{\Phi}) =  2^{-n} |\mathbf{T}^{*}_{1}\mathbf{T}_{1}|^{\beta(n - m + 1)/2 - 1} (\mathbf{Q}^{*}d\mathbf{Q})(d\mathbf{X}).
\end{equation}
In which $\mathbf{\Phi} = \mathbf{QT}$ with $\mathbf{T} \in \mathfrak{T}${\tiny U}$_{m,
n}^{+\beta}$, $\mathbf{Q} \in \mathfrak{U}^{\beta}(n)$, and $\mathbf{X} =
\mathbf{\Phi}^{*}\mathbf{\Phi} = \mathbf{T}^{*}\mathbf{T}$. Then
\begin{equation}\label{eq2teo34}
    (d\mathbf{X}) =  2^{n} |\mathbf{T}^{*}_{1}\mathbf{T}_{1}|^{-\beta(n - m + 1)/2 - 1} (\mathbf{Q}^{*}d\mathbf{Q})^{-1}(d\mathbf{\Phi}).
\end{equation}
Note that $d\mathbf{\Phi} = d\mathbf{ZB}$, and then by \citet[Proposition 1]{dggj:11} we have
that $(d\mathbf{\Phi}) = |\mathbf{B}|^{\beta n}(d\mathbf{Z})$, from which, substituting in
(\ref{eq2teo34}), we obtain
\begin{equation}\label{eq3teo34}
    (d\mathbf{X}) =  2^{n} |\mathbf{T}^{*}_{1}\mathbf{T}_{1}|^{-\beta(n - m + 1)/2 - 1} (\mathbf{Q}^{*}
    d\mathbf{Q})^{-1}|\mathbf{B}|^{\beta n}(d\mathbf{Z}).
\end{equation}
Now $\mathbf{Y} = \mathbf{Z}^{*}\mathbf{Z} = \mathbf{L}^{*}\mathbf{L}$ with  $\mathbf{L} \in
\mathfrak{T}_{m, n}^{+ \beta}$; from Lemma \ref{teoch}
\begin{equation}\label{eq4teo34}
    (d\mathbf{Z}) = 2^{-n} |\mathbf{L}^{*}_{1}\mathbf{L}_{1}|^{\beta(n - m + 1)/2 - 1} (d\mathbf{Y})(\mathbf{H}^{*}d\mathbf{H})
\end{equation}
where $\mathbf{Z} =\mathbf{HL}$, with $\mathbf{L} \in \mathfrak{T}${\tiny U}$^{^+\beta}_{m, n}$
and $\mathbf{H} \in \mathfrak{U}^{\beta}(n)$. Moreover, note that, due to the uniqueness of
Haar's measure, $(\mathbf{Q}^{*}d\mathbf{Q}) = (\mathbf{H}^{*}d\mathbf{H})$, see \citet{j:54}.
Thus, substituting (\ref{eq4teo34}) in (\ref{eq3teo34}), we obtain
\begin{eqnarray*}
  (d\mathbf{X}) &=& |\mathbf{L}^{*}_{1}\mathbf{L}_{1}|^{\beta(n - m + 1)/2 - 1}
        |\mathbf{T}^{*}_{1}\mathbf{T}_{1}|^{-\beta(n - m + 1)/2 - 1} |\mathbf{B}|^{\beta n} (d\mathbf{Y}) \\
      &=& |\mathbf{T}^{*}_{1}\mathbf{T}_{1}|^{\beta(m - n + 1)/2} |\mathbf{L}^{*}_{1}\mathbf{L}_{1}|^{\beta-(m- n + 1)/2}
      |\mathbf{B}|^{\beta n} (d\mathbf{Y}).
\end{eqnarray*} \qed
\end{proof}

The following result combines Theorems \ref{theoi} and \ref{theobf}, which enables us to study
the multivariate F and beta distributions (also termed matrix multivariate beta type II and
type I, respectively) for real normed division algebras in a class of singularity.

\begin{thm}\label{theosbf}
Let $\mathbf{X}, \mathbf{Y} \in \mathfrak{S}_{m}^{+\beta}(n)$ such that $\mathbf{X} =
\mathbf{B}^{*}\mathbf{Y}^{+}\mathbf{B}$, where $\mathbf{B} \in \mathfrak{L}_{m,m}^{\beta}(m)$.
In addition, consider the nonsingular part of the SD, $\mathbf{X} = \mathbf{G}_{1}
\mathbf{\Delta} \mathbf{G}_{1}^{*}$ and $\mathbf{Y} = \mathbf{H}_{1} \mathbf{\Lambda}
\mathbf{H}_{1}^{*}$, where $\mathbf{G}_{1}, \mathbf{H}_{1} \in \mathcal{V}_{n,m}^{\beta}$ and
$\mathbf{\Delta} = \diag(\delta_{1}, \dots,\delta_{n}), \mathbf{\Lambda} = \diag(\lambda_{1},
\dots,\lambda_{n}) \in \mathcal{D}_{n}^{1}$. Then
\begin{equation}\label{fuciSVD}
    (d\mathbf{X}) = |\mathbf{B}|^{\beta n} |\mathbf{\Delta}|^{\beta(m-n-1)/2 +1}
                    |\mathbf{\Lambda}|^{-\beta(3m-n-1)/2 - 1} (d\mathbf{Y}),
\end{equation}
with
$$
  (d\mathbf{Y}) = 2^{-n} \pi^{\tau} \prod_{i = 1}^{n} \lambda_{i}^{\beta(m-n)}
    \prod_{i < j}^{n} (\lambda_{i} - \lambda_{j})^{\beta}
    (d\mathbf{\Lambda})\wedge(\mathbf{H}_{1}^{*}d\mathbf{H}_{1}).
$$
\end{thm}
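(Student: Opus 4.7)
The plan is to factor the map $\mathbf{Y} \mapsto \mathbf{X}$ through the auxiliary variable $\mathbf{V} = \mathbf{Y}^{+}$, so that the Jacobian splits as a product of the one from Theorem \ref{theoi} (for the Moore--Penrose inversion $\mathbf{Y} \mapsto \mathbf{V}$) and the one from Theorem \ref{theobf} (for the congruence $\mathbf{V} \mapsto \mathbf{X} = \mathbf{B}^{*}\mathbf{V}\mathbf{B}$).

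First I would set $\mathbf{V} = \mathbf{Y}^{+}$ and note that, since the nonsingular part of the SD of $\mathbf{Y}$ is $\mathbf{Y} = \mathbf{H}_{1}\mathbf{\Lambda}\mathbf{H}_{1}^{*}$, the corresponding nonsingular SD of $\mathbf{V}$ is $\mathbf{V} = \mathbf{H}_{1}\mathbf{\Lambda}^{-1}\mathbf{H}_{1}^{*}$ (after a harmless reordering, which does not affect the determinant). In particular, the eigenvalue determinant attached to $\mathbf{V}$ in any application of Theorem \ref{theobf} is $|\mathbf{\Lambda}|^{-1}$, while $\mathbf{V} \in \mathfrak{S}_{m}^{+\beta}(n)$ and has the same Stiefel factor $\mathbf{H}_{1}$ as $\mathbf{Y}$.

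Next, I would invoke Theorem \ref{theobf} applied to the congruence $\mathbf{X} = \mathbf{B}^{*}\mathbf{V}\mathbf{B}$, using the SD factors $(\mathbf{G}_{1},\mathbf{\Delta})$ for $\mathbf{X}$ and $(\mathbf{H}_{1},\mathbf{\Lambda}^{-1})$ for $\mathbf{V}$. Equation (\ref{fucSVD2}) (with $\mathbf{Y}$ replaced by $\mathbf{V}$) gives
\begin{equation*}
  (d\mathbf{X}) = |\mathbf{B}|^{\beta n}\,|\mathbf{\Delta}|^{\beta(m-n-1)/2+1}\,|\mathbf{\Lambda}|^{\beta(m-n-1)/2+1}\,(d\mathbf{V}).
\end{equation*}
Then I would apply Theorem \ref{theoi} directly to the pair $(\mathbf{Y},\mathbf{V})$, with rank parameter $q=n$ and ambient size $m$, which yields
\begin{equation*}
  (d\mathbf{V}) = \prod_{i=1}^{n} \lambda_{i}^{\beta(n-2m+1)-2}\,(d\mathbf{Y}) = |\mathbf{\Lambda}|^{\beta(n-2m+1)-2}\,(d\mathbf{Y}).
\end{equation*}

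Finally, I would multiply these two identities and collect the exponents of $|\mathbf{\Lambda}|$: $\beta(m-n-1)/2+1+\beta(n-2m+1)-2 = -\beta(3m-n-1)/2-1$, which reproduces exactly (\ref{fuciSVD}). The explicit form of $(d\mathbf{Y})$ in the statement is just (\ref{sd}) of Proposition \ref{lemsd} with $q=n$ and the variables renamed. The only potentially delicate point is the bookkeeping with the reciprocal eigenvalues of $\mathbf{V}$ (ordering and the sign convention in the exterior product), but this is exactly the situation already handled inside the proof of Theorem \ref{theoi} and the determinant identity makes the reordering transparent, so no real obstacle remains beyond the routine arithmetic of combining the exponents.
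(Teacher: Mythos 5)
Your proposal is correct and follows essentially the same route as the paper's own proof: introduce the intermediate variable $\mathbf{Y}^{+}$ (the paper calls it $\mathbf{Z}$, you call it $\mathbf{V}$), apply Theorem \ref{theobf} to the congruence $\mathbf{X}=\mathbf{B}^{*}\mathbf{Y}^{+}\mathbf{B}$ with eigenvalue matrix $\mathbf{\Lambda}^{-1}$, then apply Theorem \ref{theoi} to pass from $(d\mathbf{Y}^{+})$ to $(d\mathbf{Y})$, and combine exponents. Your exponent bookkeeping, $\beta(m-n-1)/2+1+\beta(n-2m+1)-2=-\beta(3m-n-1)/2-1$, matches the paper's substitution of (\ref{neq2fb}) into (\ref{neq1fb}).
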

\begin{proof} Let $\mathbf{Z} = \mathbf{Y}^{+}$ and observe that $\mathbf{Z} = \mathbf{H}_{1} \mathbf{\Lambda}^{-1}
\mathbf{H}_{1}^{*}$. Then by Theorem \ref{theobf},
\begin{equation}\label{neq1fb}
    (d\mathbf{X}) = |\mathbf{B}|^{\beta n} |\mathbf{\Delta}|^{\beta(m-n-1)/2 +1}
                    \left|\mathbf{\Lambda}^{-1}\right|^{-\beta(m-n-1)/2 - 1} (d\mathbf{Z}).
\end{equation}
Now by Theorem \ref{theoi}
\begin{equation}\label{neq2fb}
    (d\mathbf{Z}) = \prod_{i = 1}^{n} \lambda_{i}^{\beta(-2m +n+ 1) - 2}(d\mathbf{Y}).
\end{equation}
The result follows by substituting (\ref{neq2fb}) into (\ref{neq1fb}). \qed
\end{proof}
Observe that this result corrects an erratum in the exponent of the determinant of
$\mathbf{\Lambda}$ in \citet[Theorema 2.2]{jdggj:09b}, obtained in the real case.

\section*{Conclusions}
Note that the results presented here contain as special cases all versions of the results
previously obtained in real, complex and quaternion and octonion cases,  both for singular and
nonsingular cases. For example from Theorem \ref{theo34},
\begin{equation}\label{eq1c}
    (d\mathbf{X}) = \left(|\mathbf{T}^{*}_{1}\mathbf{T}_{1}||\mathbf{L}^{*}_{1}\mathbf{L}_{1}|^{-1}\right)^{\beta(m-n-1)/2 +1}
  |\mathbf{B}|^{\beta n}(d\mathbf{Y}),
\end{equation}
Now, let $\mathbf{X}$ and $\mathbf{Y}$ be nonsingular matrices, that is, $m = n$, therefore
$\mathbf{T}_{1}$ and $\mathbf{L}_{1}$ are square nonsingular triangular matrices and
$$
  |\mathbf{T}^{*}_{1}\mathbf{T}_{1}| = |\mathbf{X}| = |\mathbf{B}^{*}\mathbf{YB}| =
  |\mathbf{B}^{*}\mathbf{L}^{*}_{1}\mathbf{L}_{1}\mathbf{B}| = |\mathbf{B}|^{2}|\mathbf{L}^{*}_{1}\mathbf{L}_{1}|
$$
then,
\begin{equation}\label{eq2c}
   |\mathbf{B}|^{2} = |\mathbf{T}^{*}_{1}\mathbf{T}_{1}|
   |\mathbf{L}^{*}_{1}\mathbf{L}_{1}|^{-1}.
\end{equation}
The desired result
$$
  (d\mathbf{X}) = |\mathbf{B}|^{\beta(m-1)+2}(d\mathbf{Y}),
$$
is obtained by substituting (\ref{eq2c}) into (\ref{eq1c}), see \citet{m:97} for real and
complex cases and \citet{lx:10} for the quaternion case, among others.

Finally, observe that the real dimension of real normed division algebras can be expressed as
potentia of 2, $\beta = 2^{n}$ for $n = 0,1,2,3$. Moreover, as observed by \citet{k:84}, the
results obtained in this work can be extended to hypercomplex cases; that is, for complex,
bicomplex, biquaternion and bioctonion (or sedenionic) algebras, which of course are not
division algebras (except the complex algebra), but are Jordan algebras, together with all
their isomorphic algebras. Note, too, that hypercomplex algebras are obtained by replacing the
real numbers with complex numbers in the construction of real normed division algebras. Thus,
the results for hypercomplex algebras are obtained by simply replacing $\beta$ with $2\beta$ in
our results (we reiterate, as reported by \citet{k:84}).

\section*{Acknowledgements}

This paper was written during J. A. D\'{\i}az-Garc\'{\i}a's stay as a visiting professor at the
Department of Statistics and O. R. of the University of Granada, Spain.

\end{document}